\def\defas{\ensuremath{\mathrel{:=}}}
\DeclareMathOperator{\id}{id}
\def\norm#1{\|  #1 \| }
\def\abs#1{|  #1 | }
\DeclareMathDelimiter{\orbrack}{\mathopen}{operators}{"5D}{largesymbols}{"03}
\DeclareMathDelimiter{\clbrack}{\mathclose}{operators}{"5B}{largesymbols}{"02}
\def\intcc#1{\ensuremath{[#1]}}
\newtheorem{theorem}{Theorem}
\newtheorem{lemma}[theorem]{Lemma}
\title{Euler-Bernoulli beams with contact forces: existence, uniqueness, and numerical solutions\thanks{This work was funded by NSERC DG, CRC, and ERA programs.}}
\author[1]{Mohamed A. Serry}
\affil[1]{Mechanical and Mechatronics Engineering, University of Waterloo, Waterloo, Ontario N2L 3G1, Canada}
\author[1]{Sean D. Peterson}
\author[2]{Jun Liu}
\affil[2]{Applied Mathematics, University of Waterloo, Waterloo, Ontario N2L 3G1, Canada}
\date{}
\providecommand{\keywords}[1]
{
  \small	
  \textbf{\textit{Keywords---}} #1
}
\begin{document}

\maketitle

\begin{abstract}
 In this paper, we investigate the Euler-Bernoulli  fourth-order boundary value problem (BVP) $w^{(4)}=f(x,w)$,   $x\in \intcc{a,b}$, with specified values of $w$ and $w''$ at the end points, where the behaviour of the right-hand side $f$ is motivated by  biomechanical, electromechanical, and structural applications incorporating contact forces. In particular, we consider the case when $f$ is bounded above and monotonically decreasing with respect to its second argument.  First, we prove the existence and uniqueness  of solutions to the   BVP. We then study numerical solutions to the BVP, where we resort to spatial discretization by means of finite difference. Similar to the original continuous-space problem, the discrete problem always possesses a unique solution.  In the case of a piecewise linear instance of $f$, the discrete problem is  an example of the absolute value equation.  We show that solutions to this absolute value equation can be obtained by means of fixed-point iterations, and that     solutions to the absolute value equation converge to  solutions of the continuous BVP. We also illustrate the performance of the  fixed-point iterations through a numerical example.
\end{abstract}
\keywords{Euler-Bernoulli beams; Contact forces; Boundary value problems; Vocal folds; Microswitches; Beams on elastic foundations. }
\section*{Introduction}
 In biomechanical applications, especially those related to voice production, contact forces arise naturally. For example, prior to and during voice production, the vocal folds (VFs) typically experience contact forces  as they  touch each other and/or collide.  In general, and due to the emergence of contact forces and the complexity of the air flow interacting with the VFs during phonation, voice-related models  are typically non-smooth.  Despite the non-smoothness, numerical solvers that are designed primarily for smooth systems are often implemented when solving phonation equations. Moreover, the nature of solutions (e.g., existence, uniqueness, smoothness, etc.) to these non-smooth equations is usually dismissed as they tend to be analytically intractable, and the effectiveness of  numerical solvers (i.e., convergence) are tested only empirically. Consequently, a major portion of the numerical studies of voice-related models lack mathematical rigor, with no guarantees  regarding the correctness and accuracy of the reported numerical solutions. Therefore, it is important to  investigate voice-related models mathematically, when feasible, and elucidate the nature of their analytical and numerical solutions.

The VFs are often modelled as elastic structures, with incorporated repulsive contact forces that take place when the folds touch each other, with forces being proportional to the degree of compression (see, e.g., \cite{StoryTitze95,SerrySteppPeterson21,GalindoPetersonErathCastroHillmanZanartu17}). A very simple abstraction of the VFs, which is more suited for scenarios concerning  static  configurations (e.g., posturing)\footnote{VF  configuration prior to phonation is an important aspect in voice production as it affects  VF vibrations and may have consequences regarding vocal health \cite{ZanartuGalindoErathPetersonWodickaHillman14,DejonckereKob09}.}, relies on considering one of the VFs as an Euler-Bernoulli beam that experiences contact spring forces, and assuming the other fold to be an image of the first one with respect to the contact plane; see Figure \ref{fig:Abstraction}.
\begin{figure}
    \centering
    \includegraphics[width=0.6\linewidth,trim={0in 1in 0in 0.5in},clip]{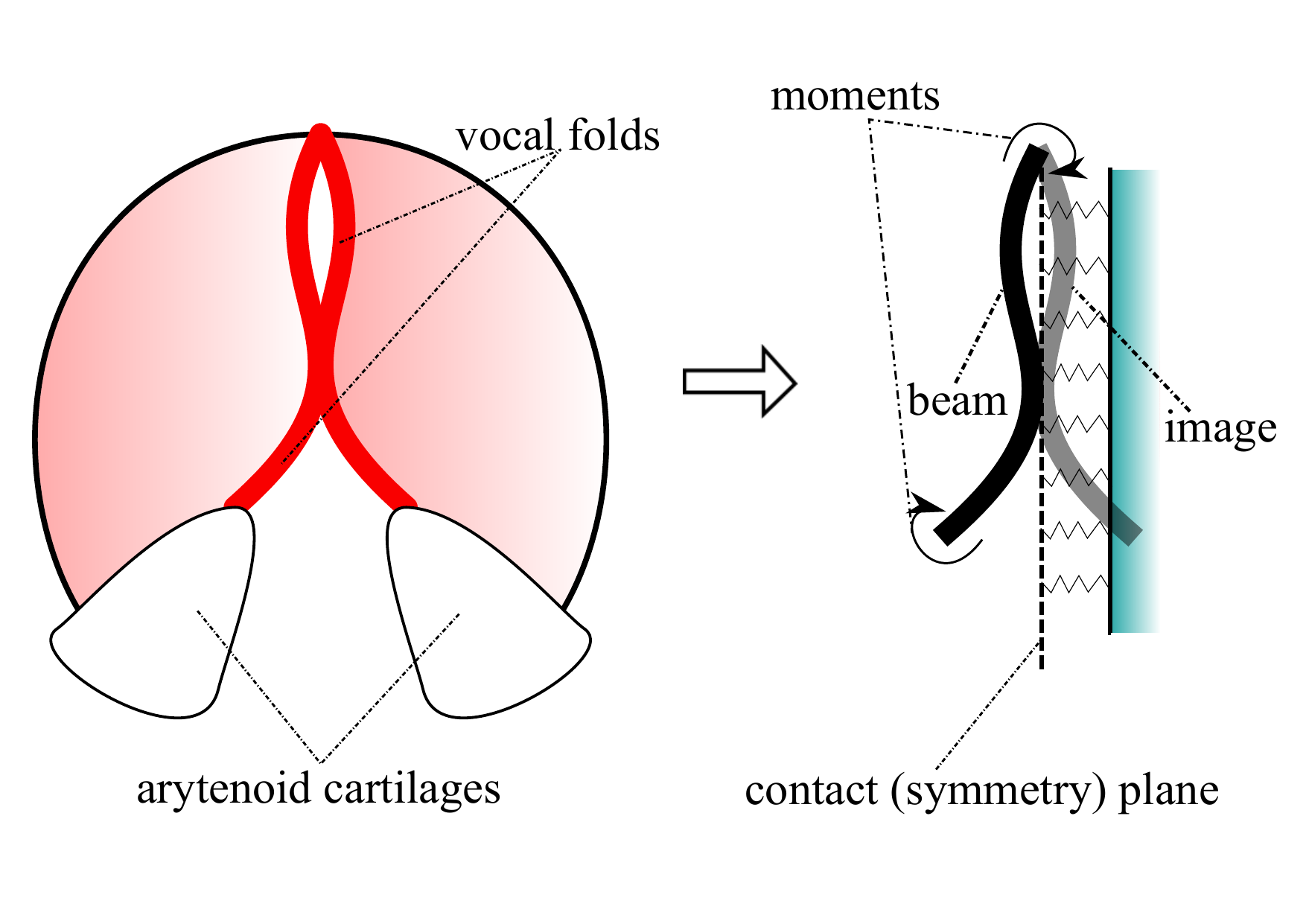}    \caption{Modelling the configuration of the vocal folds (left) using a beam with spring forces resembling contact (right).}
    \label{fig:Abstraction}
\end{figure}
That is, the VF configuration, presented by the deflection function $v$, satisfies the  differential equation $\beta v^{(4)}=F_{s}(s)$, where $\beta$ is the bending stiffness and $F_{s}(s)$ is the contact spring force (per unit length), in addition to complementary boundary conditions (BCs) in terms of displacements and moments at the end points of the beam. Such an abstraction is analytically tractable, allowing rigorous mathematical investigation that we pursue in this work. In fact, such modelling framework  has been adopted to understand curved VF configurations under different laryngeal conditions \cite{SerryAlzamendiZanartuPeterson23b}. In addition to biomechanical applications,  similar beam models with contact forces arise in electromechanical systems, such as  micoswithches  \cite{JensenHuangChowKurabayashi05,McCarthyAdamsMcGruerPotter02}, and civil structural applications, such as beams on elastic foundations \cite{HetenyiHetbenyi46,Barber11,VallabhanDas91,Jones97}. Therefore, due to their relevance in different applications, we  study mathematically  Euler-Bernoulli beams with contact forces in terms of solution existence and uniqueness and the properties of numerical solutions, especially those resulting from finite difference approximations.

The organization of this paper is as follows. Necessary preliminaries and notations are provided in Section \ref{sec:Preliminaries}, problem formulation and associated assumptions are introduced in Section \ref{sec:ProblemFormulation}, relevant findings from the literature are presented in Section \ref{sec:LiteratureReview}, existence and uniqueness results and their proofs are presented in Section \ref{sec:ExistenceUniquenss}, the discretized boundary value problem and the existence and uniqueness of its solutions are discussed in Section \ref{sec:NumericalApproach} with a further consideration of the numerical solutions in the case of a piecewise linear right-hand side in Section \ref{sec:NumericalPWL}, and the study is concluded in Section \ref{sec:Conclusion}.

\section{Preliminaries}
\label{sec:Preliminaries}
Let $\mathbb{R}$, $\mathbb{R}_+$, $\mathbb{Z}$, and $\mathbb{Z}_{+}$ denote
the sets of real numbers, nonnegative real numbers, integers, and
nonnegative integers, respectively, and
$\mathbb{N} = \mathbb{Z}_{+} \setminus \{ 0 \}$.
Let $\intcc{a,b}$ denote
the closed interval with end points $a$ and $b$, and
 $\intcc{a;b}$ stands for its discrete counterpart ($\intcc{a;b} = \intcc{a,b} \cap \mathbb{Z}$). For $x \in \mathbb{R}^{n}$, we define $\abs{x} = (\abs{x_1},\ldots , \abs{x_n})$. Given $x,y\in \mathbb{R}^{n}$, $x \leq y\Leftrightarrow x_{i} \leq y_{i}~\forall i\in \intcc{1;n}$. In this work,  
 $\| \cdot \|_{p}$ denotes any $p$-norm on $\mathbb{R}^{n}$ with $p\in \intcc{1,\infty}$, e.g., $\norm{x}_{\infty}=\max_{i\in \intcc{1;n}}\abs{x_{i}},~x\in \mathbb{R}^{n}$.  Given $A \in \mathbb{R}^{n \times m}$, $\norm{A}_{p}$  denotes the matrix norm induced by $\norm{\cdot}_{p}$ (e.g.,  $\norm{A}_{\infty}=\max_{i\in \intcc{1;n}}\sum_{j=1}^{m}\abs{A_{i,j}}$, $\norm{A}_{1}=\max_{j\in \intcc{1;m}}\sum_{i=1}^{n}\abs{A_{i,j}}$ ). Given  $A\in \mathbb{R}^{n\times n}$,  $\sigma (A)$  denotes the set of eigenvalues  of $A$. If $A\in \mathbb{R}^{n}$ is symmetric ($A=A^{\intercal}$), then   $\lambda_{i}(A)$ denotes the $i^{\mathrm{th}}$ eigenvalue of $A$ (which is real), such that
 $
\lambda_{i}(A)\leq \lambda_{i+1}(A),~i\in \intcc{1;n-1}.
 $
 The identity map defined on $\mathbb{R}^{n}$ is denoted by $\id$, where the dimension will be clear from the context.
Integration of single-valued  functions $f\colon X\rightarrow Y$ is
always understood in the sense of  Lebesgue.  $ C\intcc{a,b}$ denotes  the Banach space of continuous functions $f:\intcc{a,b}\rightarrow \mathbb{R}$ equipped with the uniform norm
$
\norm{f}_{C\intcc{a,b}}\defas\sup_{x\in \intcc{a,b}}|f(x)|
$. Given $p,q\in C\intcc{a,b}$, $p \leq q\Leftrightarrow p(x)\leq q(x) ~\forall x\in \intcc{a,b}$. Let $\mathrm{Lip}_{k}\intcc{a,b}$, where $k\in \mathbb{R}_{+}$, denotes the space of Lipschitz continuous functions $f:\intcc{a,b}\rightarrow \mathbb{R}$, with Lipschitz constants less than or equal to $k$; that is
$
f\in \mathrm{Lip}_{k}\intcc{a,b} \Leftrightarrow \abs{f(x)-f(y)}\leq k\abs{x-y}~\forall x,y\in \intcc{a,b}.
$
Finally, $C^{k}\intcc{a,b}$, where $k\in \mathbb{N}$, denotes the space of $k$-times continuously differentiable functions $f:\intcc{a,b}\rightarrow \mathbb{R}$ (derivatives at $a$ and $b$ are one-sided).

\section{Problem formulation}
\label{sec:ProblemFormulation}
As a generalization of the Euler-Bernoulli model with contact discussed in the introduction, we consider the fourth-order BVP over the compact interval $\intcc{a,b}$ with respect to the  function $w:\intcc{a,b}\rightarrow \mathbb{R}$:
\begin{subequations}
\label{eq:GeneralBVP} 
\begin{align} \label{eq:ODE}
w^{(4)}=f(x,w),&~x\in \intcc{a,b},\\ \label{eq:BCs}
   w(a)=\alpha _{1},~w(b)=\alpha_{2},&~
   w''(a)=\beta_{1},~
   w''(b)=\beta_{2},
   \end{align}
\end{subequations}
where $\alpha_{1},\alpha_{2}, \beta_{1},\beta_{2}\in \mathbb{R}$ are specified boundary values, and $f:\intcc{a,b}\times\mathbb{R}\rightarrow \mathbb{R}$  satisfies the following set of properties:
\begin{subequations} \label{eq:Assumptions}
    \begin{align} \label{eq:ContinuityAssumption}
f~\text{is continuous on } \intcc{a,b}\times\mathbb{R},
\\    
\label{eq:NegativityAssumption}
f(x,y)\leq M~ \forall (x,y)\in \intcc{a,b}\times\mathbb{R}~\text{for some}~M\in \mathbb{R},\\ \label{eq:MonotonicityAssumption}
y_{1}\geq y_{2}\Rightarrow f(x,y_{1})\leq f(x,y_{2})~\forall x\in \intcc{a,b}.
\end{align}  
\end{subequations}
These properties  resemble different contact models with linear and nonlinear springs. A particular instance of $f$ that will be investigated herein, which also satisfies assumptions \eqref{eq:Assumptions} (with $M=0$), is
\begin{equation}\label{eq:PWLCase}
  f(x,y)=-K (y-g(x))\mathbf{H}(y-g(x)),
\end{equation}
where $g:\intcc{a,b}\rightarrow \mathbb{R}$ is a  continuously differentiable function defining  a contact surface,  $K\in \mathbb{R}_{+}$ is a stiffness coefficient,  and $\mathbf{H}:\mathbb{R}\rightarrow \mathbb{R}$ is the Heaviside function. 

\section{Literature review}
\label{sec:LiteratureReview}
Fourth-order BVPs have been studied extensively in the literature. For example, Usmani \cite{Usmani79} studied the linear fourth-order BVP $z^{(4)}+\mathcal{F}(x)z=r(x)$, with BCs as in \eqref{eq:BCs}, showing solution uniqueness if $\mathcal{F}$ satisfies a prescribed one-sided bound\footnote{The mathematical analysis in \cite{Usmani79} was, in fact,  insufficient to prove the solution  uniqueness claim under the one-sided boundedness property, which necessitated introducing a refined  proof in  \cite{Yang88}.} Aftabizadeh \cite{Aftabizadeh86} investigated the  BVP 
$
    z^{(4)}=\mathcal{F}(x,z,z'')$, with BCs as in \eqref{eq:BCs},
proving solution existence  when  $\mathcal{F}$ is uniformly bounded by a constant.
Yang \cite{Yang88} has improved this result, showing that when  $\mathcal{F}$ is bounded by an affine function (linear growth condition) with parameters not exceeding  prescribed thresholds, then solutions to the considered BVP exist. This linear growth condition has been relaxed in the succeeding  works \cite{LiYang10,LiGao19}, where the BCs in \eqref{eq:BCs} are zero. Li and Liang \cite{LiLiang13} extended the  result of Yang \cite{Yang88} to cover BVPs with right-hand side of the form $\mathcal{F}(x,z,z',z'',z''')$ (where the BCs in \eqref{eq:BCs} are homogeneous). Quang A and Quy  \cite{QuangAQuy18} studied  the fully nonlinear BVP  $z^{(4)}=\mathcal{F}(x,z,z',z'',z''')$, with  the BCs in \eqref{eq:BCs} being homogeneous, showing solution existence and uniqueness within a rectangular  domain of $(x,z,z',z'',z''')$ if $\mathcal{F}$ is bounded, within that domain,  by a constant that depends on the rectangular domain and that $\mathcal{F}$ is Lipschitz within that domain. 

In our current study, we require the right-hand side $f$ to satisfy a monotonic property (equation \eqref{eq:MonotonicityAssumption}). There exist several analyses in the literature concerning fourth-order BVPs with monotonic right-hand sides. For instance, Agarwal \cite{Agarwal89} showed that the BVP $z^{(4)}=\mathcal{F}(x,z)$, with BCs different from those in \eqref{eq:BCs}, possesses at most one solution if $\mathcal{F}$ is non-increasing with respect to the second argument. Bai \cite{Bai00} studied the BVP $z^{(4)}=\mathcal{F}(x,z,z'')$, with a homogeneous version of the BCs in \eqref{eq:BCs}, showing that if the BVP possesses upper and lower solutions, where $\mathcal{F}$ satisfies some monotonic properties, then there exists at least one solution to the BVP. Han and Li \cite{HanLi07} studied  the BVP $z^{(4)}=\mathcal{F}(x,z)$, with a homogeneous version of \eqref{eq:BCs}, showing that if $\mathcal{F}$ is monotnonically increasing in the second argument, and that some  technical assumptions are full-filled, then the BVP possesses multiple solutions. Besides the mentioned works, the literature is rich in various investigations of fourth-order BVPs with different BCs  (see, e.g., \cite{FengJiGe09,LiChen19,QuangAHuong18,QuangAQuy17,CaballeroHarjaniSadarangani11}).   To the best of our knowledge, the  results in the literature can not be directly applied to address the existence and uniqueness of the BVP \eqref{eq:GeneralBVP} with the assumptions in \eqref{eq:Assumptions}; which, in addition to the biomechanical, electromechanical, and structural applications discussed in the Introduction, motivate the analysis of the BVP \eqref{eq:GeneralBVP} in this study.

\section{Existence and uniqueness results}
\label{sec:ExistenceUniquenss}
We begin our analysis of the BVP \eqref{eq:GeneralBVP} with the following existence result.
\begin{theorem}\label{thm:ExistenceContinuous}
    The BVP \eqref{eq:GeneralBVP} with $f$ satisfying 
assumptions \eqref{eq:Assumptions} possesses at least one solution.
\end{theorem}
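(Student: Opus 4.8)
The plan is to recast the BVP \eqref{eq:GeneralBVP} as a fixed-point equation on $C\intcc{a,b}$ and to invoke Schauder's fixed-point theorem. First I would split the fourth-order problem into two coupled second-order problems by setting $u = w''$, so that $u'' = f(x,w)$ with $u(a)=\beta_1,\ u(b)=\beta_2$, and $w'' = u$ with $w(a)=\alpha_1,\ w(b)=\alpha_2$. Each of these Dirichlet problems is inverted with the nonnegative Green's function $G(x,s)\geq 0$ of $-v''=h$, $v(a)=v(b)=0$. Composing the two inversions yields an equivalent integral equation $w = Tw$, where
\[
(Tw)(x) = \phi(x) + \int_a^b \mathcal{G}(x,t)\, f\bigl(t,w(t)\bigr)\, dt, \qquad \mathcal{G}(x,t) = \int_a^b G(x,s)\, G(s,t)\, ds \geq 0,
\]
and $\phi$ collects the boundary data. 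The key structural fact is that the composed kernel $\mathcal{G}$ is \emph{nonnegative}, which is what will let the one-sided bound and the monotonicity of $f$ interact.

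Next I would build a closed, bounded, convex set $\mathcal{S}\subset C\intcc{a,b}$ that $T$ maps into itself. Since $\mathcal{G}\geq 0$ and $f\leq M$ by \eqref{eq:NegativityAssumption}, every $Tw$ obeys the pointwise upper bound $Tw \leq \bar w$, where $\bar w(x) = \phi(x) + M\int_a^b \mathcal{G}(x,t)\,dt$; this upper estimate is automatic and holds for all $w$. The lower bound is where \eqref{eq:MonotonicityAssumption} enters: setting $B_+ = \norm{\bar w}_{C\intcc{a,b}}$, any $w$ with $w\leq \bar w$ satisfies $w(t)\leq B_+$, so monotone decrease in the second argument gives $f(t,w(t)) \geq f(t,B_+) \geq c$ with $c = \min_{t\in\intcc{a,b}} f(t,B_+)$ finite (by continuity \eqref{eq:ContinuityAssumption}). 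Hence $Tw \geq \underline{w}$, where $\underline{w}(x) = \phi(x) + c\int_a^b \mathcal{G}(x,t)\,dt \leq \bar w$. Taking $\mathcal{S} = \Set{w\in C\intcc{a,b}}{\underline{w}\leq w\leq \bar w}$ then gives $T(\mathcal{S})\subseteq \mathcal{S}$.

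I would then verify that $T$ is completely continuous on $\mathcal{S}$: continuity follows from the continuity of $f$ together with the uniform boundedness of the values $w(t)$ on $\mathcal{S}$ (so $f(\cdot,w_n(\cdot)) \to f(\cdot,w(\cdot))$ uniformly whenever $w_n\to w$ in $C\intcc{a,b}$), and compactness follows from Arzel\`a--Ascoli, since $x\mapsto \mathcal{G}(x,t)$ is Lipschitz uniformly in $t$ (as $G$ is), making $T(\mathcal{S})$ equicontinuous and uniformly bounded. Schauder's theorem then furnishes a fixed point $w^*\in\mathcal{S}$. Finally, because $w^*$ is constructed from Green's functions it satisfies the boundary conditions \eqref{eq:BCs} automatically, and a bootstrapping regularity argument ($f(\cdot,w^*(\cdot))$ continuous $\Rightarrow u^*\in C^2 \Rightarrow w^*\in C^4$) shows $w^*$ is a genuine classical solution of \eqref{eq:ODE}.

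The main obstacle I anticipate is manufacturing the lower bound and thereby closing the invariant set: the hypotheses bound $f$ only from above, so without monotonicity there is no a priori lower control on $Tw$, and the argument hinges on feeding the automatic upper bound $w\leq \bar w$ back through \eqref{eq:MonotonicityAssumption} to bound $f(t,w(t))$ from below. Establishing the nonnegativity of the composed kernel $\mathcal{G}$ (a positivity property for this particular fourth-order operator) is the other point that must be handled carefully, since every inequality above depends on it.
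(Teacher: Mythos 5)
Your proposal is correct and follows essentially the same route as the paper: the same reduction to an integral equation with the composed nonnegative Green's kernel, the same interplay of the upper bound $f\leq M$ with the monotonicity assumption \eqref{eq:MonotonicityAssumption} to construct an invariant order interval, and Schauder's fixed-point theorem to conclude. The only difference is bookkeeping within the same strategy: the paper intersects the order interval with a Lipschitz ball $\mathrm{Lip}_{\eta}\intcc{a,b}$ so that the domain $D_{\eta}$ itself is compact, whereas you keep the full (closed, bounded, convex) order interval and place the compactness on the operator $T$ via Arzel\`a--Ascoli applied to $T(\mathcal{S})$ --- two standard, equivalent implementations of the Schauder argument.
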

% Interestingly, 4th order BVPs idential or similar to the one presented above  have witnessed extensive recent research \cite{aftabizadeh1986existence,li2013existence,wei2019existence,Agarwal89,chang2021existence,dang2017existence,huong2022existence,quang2018existence}.  In general, most existence-uniqueness results in the aforementioned works are obtained as follows. First, the BVP is written in an equivalent form as
\begin{proof}
The  BVP \eqref{eq:GeneralBVP} can be written in the equivalent integral form (see, e.g.,  \cite{RuyunJihuiShengmao97})
\begin{equation}
\label{eq:BVPIntegral}
w(x)=\bar{w}_{M}(x)+\int_{a}^{b}\tilde{G}(x,s) \left(\int_{a}^{b}\tilde{G}(s,t)f_{M}(t,w(t))\mathrm{d}t\right) \mathrm{d}s,~x\in \intcc{a,b},
\end{equation}
where  $\bar{w}_{M}:\intcc{a,b}\rightarrow \mathbb{R}$ is the fourth-order polynomial satisfying $\bar{w}^{(4)}=M$ and  the BCs in \eqref{eq:BCs},  $\tilde{G}:\intcc{a,b}\times \intcc{a,b}\rightarrow \mathbb{R}$ is the Green's function associated with the second-order BVP $y''=h,~y(a)=y(b)=0$, and 
\begin{equation}\label{eq:Fm}
f_{M}(\cdot,\cdot)\defas f(\cdot,\cdot)-M.   
\end{equation}
Note that $f_{M}$ is continuous, nonpositive, and monotonically decreasing in its second argument due to assumptions \eqref{eq:Assumptions}. The polynomial $\bar{w}_{M}$ is given by:
\begin{equation}\label{eq:wbar}
\begin{split} 
  \bar{w}_{M}(x)=&\alpha_{1}+\left(\frac{\alpha_{2}-\alpha_{1}}{b-a}-(\beta_{2}+2\beta_{1})\frac{b-a}{6}+\frac{M (b-a)^3}{24}\right)(x-a)+\frac{\beta_{1}}{2}(x-a)^2\\
  & +\left(\frac{\beta_{2}-\beta_{1}}{6(b-a)}-\frac{M(b-a)}{12}\right)(x-a)^3+\frac{M}{24}(x-a)^4, 
\end{split}
\end{equation}
and the Green's function $\tilde{G}$ is given by:
\begin{equation}\label{eq:TildeG}
  \tilde{G}(x,s)=\frac{-1}{b-a}\begin{cases} (b-x)(s-a),& a\leq s\leq x\leq b, \\ 
   (b-s)(x-a),& a\leq x\leq s\leq b. 
  \end{cases}  
\end{equation}
Note that $\tilde{G}(x,s)\leq0$ and $\abs{\tilde{G}(x,s)}\leq b-a$ for all $(x,s)\in \intcc{a,b}\times \intcc{a,b}$. By applying Fubini's theorem (see, e.g., \cite{RoydenFitzpatrick88,Tao11}), the iterated integral in \eqref{eq:BVPIntegral} can be rewritten as
\begin{equation}\label{eq:IteratedIntegral}
\int_{a}^{b} \left(\int_{a}^{b}\tilde{G}(x,s)\tilde{G}(s,t)\mathrm{d}s\right)f_{M}(t,w(t)) \mathrm{d}t=\int_{a}^{b}G(x,s)f_{M}(s,w(s))\mathrm{d}s,
\end{equation}
where
\begin{equation}\label{eq:G}
G(x,s)\defas \int_{a}^{b}\tilde{G}(x,t)\tilde{G}(t,s)\mathrm{d}t
\end{equation}
is the Green's function associated with the BVP \eqref{eq:GeneralBVP}. Note that $G$ is continuous and nonnegative (due to the nonpositivity of $\tilde{G}$). Using the definition of $\tilde{G}$ in \eqref{eq:TildeG}, it can be shown that 
$
\abs{\tilde{G}(x,s)-\tilde{G}(y,s)}\leq 3 \abs{x-y}
$
for all $x,y,s\in \intcc{a,b}$, which implies, using  the definition \eqref{eq:G}, that
$ 
\abs{{G}(x,s)-{G}(y,s)}\leq L_{G} \abs{x-y}
$
for all $x,y,s\in \intcc{a,b}$, where 
\begin{equation}\label{eq:LG}
   L_{G}\defas 3 \sup_{s\in \intcc{a,b}} \int_{a}^{b}\abs{\tilde{G}(t,s)}\mathrm{d}t\leq 3 (b-a)^2.
\end{equation}

Define the operator $\mathcal{L}:C\intcc{a,b}\rightarrow C^{4}\intcc{a,b}$ as
\begin{equation}\label{eq:OperatorL}
\mathcal{L}[v](x)\defas\bar{w}_{M}(x)+\int_{a}^{b}G(x,s)f_{M}(s,v(s))\mathrm{d}s.
\end{equation}
The integral definition \eqref{eq:BVPIntegral}, in addition to \eqref{eq:IteratedIntegral}, indicates that a solution to the BVP \eqref{eq:GeneralBVP} is a fixed point for the operator $\mathcal{L}$. We will show by means of Schauder's fixed-point theorem that $\mathcal{L}$ has at least one fixed point.  The nonnegativity of $G$ and the nonpositivity and monotonicity  of $f_{M}$ imply the following for any $p,q\in C\intcc{a,b}$:
\begin{align}
p\geq  q \Rightarrow \mathcal{L}[p]\leq \mathcal{L}[q],\\
 \mathcal{L}[p]\leq \bar{w}_{M}.
\end{align}
Therefore, for all $q\in C\intcc{a,b}$,
\begin{equation}\label{eq:InclusionProperty}
\mathcal{L}[\bar{w}_{M}]\leq q\leq \bar{w}_{M}\Rightarrow \mathcal{L}[\bar{w}_{M}]\leq \mathcal{L}[q]\leq \bar{w}_{M}.
\end{equation}
By taking this fact into consideration, define 
$$
D_{\eta}\defas\{q\in C\intcc{a,b},~ \mathcal{L}[\bar{w}_{M}]\leq q\leq \bar{w}_{M},~q\in\mathrm{Lip}_{\eta}\intcc{a,b} \},
$$
where
$$
\eta:=\sup_{x \in \intcc{a,b}} \abs{\bar{w}_{M}'(x)}+ L_{G} \sup_{x\in \intcc{a,b}} |f_{M}(x,\bar{w}_{M}(x))| (b-a)
$$
and $L_{G}$ is defined in \eqref{eq:LG}.
By straightforward analysis, it can be shown that $D_{\eta}$ is convex and closed. Moreover, using the Arzela–Ascoli theorem (see, e.g., \cite{Zeidler12}), $D_{\eta}$ is a  relatively  compact subset of the Banach space $C\intcc{a,b}$ (hence, $D_{\eta}$ is compact).

Now, we claim that 
\begin{equation}\label{eq:SelfMap}
\mathcal{L}:D_{\eta}\rightarrow D_{\eta}.
\end{equation}
To prove this, we first note that, using \eqref{eq:InclusionProperty}, 
$
q\in D_{\eta}\Rightarrow \mathcal{L}[\bar{w}_{M}]\leq \mathcal{L}[q]\leq \bar{w}_{M}.
$
It is now left  to show that for  $q\in D_{\eta}$, $\mathcal{L}[q]\in\mathrm{Lip}_{\eta}\intcc{a,b}$. Let $q\in D_{\eta}$ and  $x,y\in \intcc{a,b}$. Then,
\begin{align*}
\abs{\mathcal{L}[q](y)-\mathcal{L}[q](x)}=& \abs{ \bar{w}_{M}(y)-\bar{w}_{M}(x)+ \int_{a}^{b}\left(G(y,s)-G(x,s)\right)f_{M}(s,q(s))\mathrm{ds}}\\
& \leq \abs{\bar{w}_{M}(y)-\bar{w}_{M}(x)}+\int_{a}^{b}\abs{G(y,s)-G(x,s)}\abs{f_{M}(s,q(s))}\mathrm{d}s. 
\end{align*}
By assumptions \eqref{eq:NegativityAssumption} and \eqref{eq:MonotonicityAssumption}, and the definitions of $D_\eta$ and $f_{M}$, we have
$
\abs{f_{M}(s,q(s))}\leq \abs{f_{M}(s,\bar{w}_{M}(s))}~\forall s\in \intcc{a,b}.
$
Moreover, by the mean value theorem,
$
    \bar{w}_{M}(y)-\bar{w}_{M}(x)=w_{M}^{'}(z_{1})(y-x),
$
for some $z_{1}\in [x,y]$, and as we showed previously, 
$
\abs{G(y,s)-G(x,s)} \leq L_{G}\abs{y-x}.
$
Therefore,
$$
\abs{\mathcal{L}[q](y)-\mathcal{L}[q](x)}
\leq \sup_{z \in \intcc{a,b}} |\bar{w}_{M}^{'}(z)| |y-x|+\int_{a}^{b}L_{G} \sup_{z \in \intcc{a,b}} |f_{M}(z,\bar{w}_{M}(z))|\mathrm{d}s |y-x|\leq \eta |y-x|,
$$
and that proves \eqref{eq:SelfMap}.  

Next, we show that    $\mathcal{L}$ is continuous on $D_{\eta}$. Define 
$
R_{D_{\eta}}=[\min_{x\in \intcc{a,b}}\mathcal{L}[\bar{w}_{M}](x), \max_{x\in \intcc{a,b}}\bar{w}_{M}(x)].
$
The  definitions of $D_ {\eta}$ and $R_{D_{\eta}}$   indicate that 
$
q\in D_{\eta}\Rightarrow q(x)\in R_{D_{\eta}}~\forall x\in \intcc{a,b}.
$
Fix $p\in D_{\eta}$ and let $\epsilon>0$ be arbitrary. Then, using the  uniform continuity of $f_{M}$ on the compact set $\intcc{a,b}\times R_{D_{\eta}}$, there exists $\tau>0$ (that depends on $\epsilon$ only) such that for any $q\in  D_{\eta}$ satisfying
$
\norm{p-q}_{C\intcc{a,b}}\leq \tau$ , $$ \sup_{s\in \intcc{a,b}}|f_{M}(s,p(s))-f_{M}(s,q(s))|< \frac{\epsilon}{\sup_{x\in \intcc{a,b}}\int_{a}^{b}G(x,s)\mathrm{d}s},$$ implying
 \begin{align*}
|\mathcal{L}[p](x)-\mathcal{L}[q](x)|&\leq \int_{a}^{b}\abs{G(x,s)}\abs{f_{M}(s,p(s))-f_{M}(s,q(s))}\mathrm{d}s\\
 %&\leq \int_{a}^{b}\abs{G(x,s)}\mathrm{d}s \sup_{s\in \intcc{a,b}}\abs{f(s,p(s))-f(s,q(s))}\\
 &\leq \sup_{t\in \intcc{a,b}}\int_{a}^{b}G(t,s)\mathrm{d}s \sup_{s\in \intcc{a,b}}|f_{M}(s,p(s))-f_{M}(s,q(s))|< \epsilon~\forall x\in \intcc{a,b} 
 \end{align*}
(i.e., $\norm{\mathcal{L}[p]-\mathcal{L}[q]}_{C\intcc{a,b}}< \epsilon$).  Therefore, by Schauder's fixed-point theorem \cite{Zeidler12}, $\mathcal{L}$ has at least one fixed point in $D_\eta$, implying solution existence to the  BVP \eqref{eq:GeneralBVP}.
\end{proof}
Next, we show that our BVP  possesses at most one solution.

\begin{theorem} \label{Thm:AtMostUniqueness}
 The BVP \eqref{eq:GeneralBVP}, with $f$ satisfying \eqref{eq:ContinuityAssumption} and \eqref{eq:MonotonicityAssumption}, possesses at most one solution.
 \end{theorem}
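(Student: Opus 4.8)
The plan is to prove uniqueness by a short energy-type identity that exploits the precise boundary data, with monotonicity supplying a decisive sign. First I would suppose that $w_{1}$ and $w_{2}$ are both solutions of \eqref{eq:GeneralBVP} and set $u \defas w_{1}-w_{2}$. Because $f$ is continuous \eqref{eq:ContinuityAssumption} and each $w_{i}$ is continuous, each $w_{i}^{(4)}=f(\cdot,w_{i})$ is continuous, so $w_{i}\in C^{4}\intcc{a,b}$ and hence $u\in C^{4}\intcc{a,b}$. Since both solutions share the boundary conditions \eqref{eq:BCs}, the difference satisfies
\[
u(a)=u(b)=0,\qquad u''(a)=u''(b)=0 .
\]
The monotonicity assumption \eqref{eq:MonotonicityAssumption} then gives a pointwise sign condition: at any $x$ with $u(x)\geq 0$ we have $w_{1}(x)\geq w_{2}(x)$, hence $f(x,w_{1}(x))\leq f(x,w_{2}(x))$, i.e. $u^{(4)}(x)=f(x,w_{1}(x))-f(x,w_{2}(x))\leq 0$; symmetrically, $u(x)\leq 0$ forces $u^{(4)}(x)\geq 0$. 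In either case $u(x)\,u^{(4)}(x)\leq 0$ for all $x\in\intcc{a,b}$, so $\int_{a}^{b} u(x)\,u^{(4)}(x)\,\mathrm{d}x\leq 0$.

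Next I would compute the same integral a second way, integrating by parts twice and using the boundary data above to annihilate the boundary terms:
\[
\int_{a}^{b} u\,u^{(4)}\,\mathrm{d}x
=\bigl[u\,u'''\bigr]_{a}^{b}-\int_{a}^{b} u'\,u'''\,\mathrm{d}x
=-\int_{a}^{b} u'\,u'''\,\mathrm{d}x
=-\bigl[u'\,u''\bigr]_{a}^{b}+\int_{a}^{b}(u'')^{2}\,\mathrm{d}x
=\int_{a}^{b}(u'')^{2}\,\mathrm{d}x,
\]
where the first boundary term vanishes because $u(a)=u(b)=0$ and the second because $u''(a)=u''(b)=0$. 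Combining the two computations forces $\int_{a}^{b}(u'')^{2}\,\mathrm{d}x\leq 0$, hence $\int_{a}^{b}(u'')^{2}\,\mathrm{d}x=0$; as $u''$ is continuous this yields $u''\equiv 0$ on $\intcc{a,b}$. Therefore $u$ is affine, and $u(a)=u(b)=0$ with $a\neq b$ forces $u\equiv 0$, i.e. $w_{1}=w_{2}$.

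I expect the only delicate points to be minor. The first is confirming that every solution is genuinely $C^{4}$ so the repeated integration by parts is legitimate; this follows from the continuity of $f$ together with the continuity of $w$. The second, and the structurally essential one, is matching the prescribed data — both $w$ and $w''$ specified at each endpoint — to exactly the pair of boundary terms that must vanish; it is precisely this simply-supported-type boundary structure that collapses the energy identity to $\int_{a}^{b}(u'')^{2}$, while \eqref{eq:MonotonicityAssumption} supplies the opposing sign. Notably, the upper bound \eqref{eq:NegativityAssumption} is not needed, consistent with the weaker hypotheses in the statement.
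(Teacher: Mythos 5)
Your proof is correct and follows essentially the same route as the paper's: form the difference of two solutions, use monotonicity \eqref{eq:MonotonicityAssumption} to get the pointwise sign $u\,u^{(4)}\leq 0$, integrate by parts twice with the simply-supported boundary conditions to obtain $\int_{a}^{b}(u'')^{2}\,\mathrm{d}x\leq 0$, and conclude $u''\equiv 0$ and hence $u\equiv 0$. Your added remarks (verifying $C^{4}$ regularity from \eqref{eq:ContinuityAssumption} and noting that \eqref{eq:NegativityAssumption} is unnecessary) are accurate refinements of the same argument.
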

\begin{proof}
    Let $\phi,\psi\in C^{4}\intcc{a,b}$ be two solutions to the BVP \eqref{eq:GeneralBVP}, where $f$ satisfies \eqref{eq:ContinuityAssumption} and \eqref{eq:MonotonicityAssumption}, and define the error function $E\defas \phi-\psi\in C^{4}[a,b]$. Then, ${E}$ satisfies
${E}^{(4)}=f(x,\phi)-f(x,\psi)$, $
E(a)=E(b)=E''(a)=E''(b)=0.$
Following the approach in \cite{Agarwal89} and by multiplying $E^{(4)}$ by $E$, we get
$
E(x) E^{(4)}(x)=(\phi(x)-\psi(x))(f(x,\phi(x))-f(x,\psi(x)))\leq 0~\forall x\in \intcc{a,b}
$
due to the monotonicity assumption \eqref{eq:MonotonicityAssumption}. Integrating the both sides of the  inequality,  using the homogeneous BCs of ${E}$, and utilizing integration of part yield 
$
\int_{a}^{b}E(s) E^{(4)}(s)\mathrm{d}s=\cancel{\left[E(x)E^{'''}(x)\right]_{x=a}^{x=b}}-\int_{a}^{b}E^{'}(s) E^{'''}(s)\mathrm{d}s
=-\cancel{\left[E'(x)E^{''}(x)\right]_{x=a}^{x=b}}+ \int_{a}^{b}(E^{''}(s))^2\mathrm{d}s\leq 0.
$
Hence, 
$
E^{''}=0.
$
This implies, using the fact that $E(a)=E(b)=0$, that $E=0$. 
\end{proof} 

Combining Theorems \ref{thm:ExistenceContinuous} and \ref{Thm:AtMostUniqueness}, we have: 
\begin{theorem} \label{Thm:MainExistenceUniquenessResult}
 The BVP \eqref{eq:GeneralBVP} with $f$ satisfying  \eqref{eq:Assumptions} possesses a unique solution.
 \end{theorem}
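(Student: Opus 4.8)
The plan is to observe that this statement is an immediate corollary of the two preceding theorems, so the proof amounts to verifying that the hypotheses of both are subsumed by the single assumption set \eqref{eq:Assumptions} and then combining the conclusions.

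First I would invoke Theorem \ref{thm:ExistenceContinuous} directly: its hypothesis is precisely that $f$ satisfies \eqref{eq:Assumptions}, so it yields at least one solution to the BVP \eqref{eq:GeneralBVP}. Next I would apply Theorem \ref{Thm:AtMostUniqueness}, whose hypothesis is only that $f$ satisfies \eqref{eq:ContinuityAssumption} and \eqref{eq:MonotonicityAssumption}. Since \eqref{eq:Assumptions} is the conjunction of \eqref{eq:ContinuityAssumption}, \eqref{eq:NegativityAssumption}, and \eqref{eq:MonotonicityAssumption}, any $f$ satisfying \eqref{eq:Assumptions} in particular satisfies the weaker pair required by Theorem \ref{Thm:AtMostUniqueness}; hence that theorem applies and the BVP has at most one solution.

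Combining the two facts, the solution set is simultaneously nonempty and of cardinality at most one, so it is a singleton, which is exactly the claimed existence and uniqueness. The only point requiring any care is the bookkeeping of hypotheses, namely confirming that the upper-bound condition \eqref{eq:NegativityAssumption}, which is used in the existence argument but not in the uniqueness argument, is available here because it is part of \eqref{eq:Assumptions}; there is no genuine analytical obstacle, as all the substantive work has already been carried out in the proofs of Theorems \ref{thm:ExistenceContinuous} and \ref{Thm:AtMostUniqueness}.
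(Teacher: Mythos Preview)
Your proposal is correct and matches the paper's own treatment exactly: the paper simply states that the result follows by combining Theorems \ref{thm:ExistenceContinuous} and \ref{Thm:AtMostUniqueness}, with no additional argument given. Your careful check that \eqref{eq:Assumptions} subsumes the hypotheses of both theorems is precisely the bookkeeping the paper leaves implicit.
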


% \newpage
\section{Numerical solutions}
\label{sec:NumericalApproach}
In this section, we  analyze the numerical solutions to the BVP \eqref{eq:GeneralBVP}, where we adapt  
the finite difference
discretization scheme presented in  \cite{usmani1975numerical}, which was devised for linear fourth-order BVPs. The literature is rich in various numerical approaches for fourth-order BVPs, including  Adomian decomposition  method  \cite{wazwaz2002numerical}, wavelet-based methods  \cite{ali2011numerical}, spline-based methods \cite{zahra2011finite}, differential transform methods \cite{erturk2007comparing}, and Galerkin methods \cite{hajji2008numerical}. However, our adoption of the finite difference method herein is motivated by the facts that the resulting discrete equation possesses, like the original continuous-space problem, an existence and uniqueness property (Theorem \ref{thm:ExistenceUniquenessDiscrete}), and that in some cases, the solutions of the discrete equation can be approximated efficiently via means of fixed-point iterations (Theorem \ref{thm:FixedPointIterations}), where solutions to the discrete equation can be shown to converge to solutions of the original BVP (Theorem \ref{thm:Convergence}). Such aspects are typically difficult to achieve with the numerical methods in the literature, especially with the weak assumptions imposed in this work.

 Let 
$N\in \mathbb{N}$, $h=(b-a)/(N+1)$, $x_{0}=a$,  $x_{i}=a+ ih,~i\in [1;N]$, and $x_{N+1}=b$. Moreover, let $\mathbf{w}_{i},~i\in [0;N+1]$, denote the approximate  values of the solution $w$ to \eqref{eq:GeneralBVP}, with $f$ satisfying \eqref{eq:Assumptions}, at $x_{i}$ ($\mathbf{w}_{i}\approx w(x_{i})$),  corresponding to solutions of a discrete version of the BVP \eqref{eq:GeneralBVP} obtained using  finite difference (forward and backward difference approximations of second-order derivatives at the boundary points $a$ and $b$, respectively, and central difference approximations of the fourth-order derivatives). In other words, $\{\mathbf{w}_{i}\}_{i=0}^{N+1}$ satisfies: $\mathbf{w}_{0}=\alpha_{1}$, $
2\mathbf{w}_{0} -5\mathbf{w}_{1}+4\mathbf{w}_{2}-\mathbf{w}_{3} =\beta_{1}h^{2}-h^{4}f(x_{1},\mathbf{w}_{1})+\frac{1}{12}h^{4}f(x_{0},\mathbf{w}_{0})$, $
\mathbf{w}_{i-2}-4\mathbf{w}_{i-1}+6\mathbf{w}_{i}-4\mathbf{w}_{i+1}+\mathbf{w}_{i+2}=h^{4
}f(x_{i},\mathbf{w}_{i}),~i\in [2;N-2]$, 
$2\mathbf{w}_{N+1} -5\mathbf{w}_{N}+4\mathbf{w}_{N-1}-\mathbf{w}_{N-2}=\beta_{2}h^{2}-h^{4}f(x_{N},\mathbf{w}_{N})+\frac{1}{12}h^{4}f(x_{N+1},\mathbf{w}_{N+1})$,
$\mathbf{w}_{N+1}=\alpha_{2}$.
This set of equations can be rewritten as
$
5\mathbf{w}_{1}-4\mathbf{w}_{2}+\mathbf{w}_{3}=-\beta_{1}h^{2}+2 \alpha_{1} +h^{4}f(x_{1},\mathbf{w}_{1})-\frac{1}{12}h^{4}f(x_{0},\alpha_{1})$, 
$-4\mathbf{w}_{1}+6\mathbf{w}_{2}-4\mathbf{w}_{3}+\mathbf{w}_{4}=h^{4
}f(x_{2},\mathbf{w}_{2})-\alpha_{1}$,
 $\mathbf{w}_{i-2}-4\mathbf{w}_{i-1}+6\mathbf{w}_{i}-4\mathbf{w}_{i+1}+\mathbf{w}_{i+2}=h^{4
}f(x_{i},\mathbf{w}_{i}),~i\in [3;N-2]$,
$\mathbf{w}_{N-3}-4\mathbf{w}_{N-2}+6\mathbf{w}_{N-1}-4\mathbf{w}_{N}=h^{4
}f(x_{N-1},\mathbf{w}_{N-1})-\alpha_{2}$,
 $5\mathbf{w}_{N}-4\mathbf{w}_{N-1}+\mathbf{w}_{N-2}=2\alpha_{2}-\beta_{2}h^{2}+h^{4}f(x_{N},\mathbf{w}_{N})-\frac{1}{12}h^{4}f(x_{N+1},\alpha_{2})$, which, in the vector-matrix form,  can be written as 
\begin{equation}\label{eq:DiscreteEquation}
\mathcal{A}\mathcal{W}=\bar{\mathcal{B}}+h^{4}\left(\mathcal{M}+{\mathcal{F}}_{M}(\mathcal{W})\right),
\end{equation}
where $\mathcal{W}=[\mathbf{w}_{1},\cdots,\mathbf{w}_{N}]^{\intercal}\in \mathbb{R}^{N}$,  $\mathcal{A}\in \mathbb{R}^{N\times N}$ and $\bar{\mathcal{B}}\in \mathbb{R}^{N}$ are defined as
$$
\mathcal{A}=\left(\begin{array}{ccccccc}
    5 & -4 & 1 & & & &  \\
      -4 &6 &-4 &1& & & \\
     1 &-4 &6&-4&1 & &\\
     \ddots&\ddots &\ddots &\ddots &\ddots &\ddots &\\
     & & &-4 & 6& -4&1\\
     & & & 1&-4 &6 &-4\\
     & & & &1 &-4 &5\\
\end{array} \right),~[\bar{\mathcal{B}}]_{i}=\begin{cases} -\beta_{1}h^{2}+2 \alpha_{1}-\frac{1}{12}h^{4}f(x_{0},\alpha_{1}),& i=1,\\
-\alpha_{1},& i=2,\\
0,& i\in [3;N-2],\\
-\alpha_{2},& i=N-1,\\
-\beta_{2}h^{2}+2 \alpha_{2}-\frac{1}{12}h^{4}f(x_{N+1},\alpha_{2}),& i=N,
\end{cases}
$$
and $\mathcal{M}\in \mathbb{R}^{N}$ and ${\mathcal{F}}_{M}\colon  \mathbb{R}^{N}\rightarrow  \mathbb{R}^{N}$ are given by $[\mathcal{M}]_{i}=M$,   
$ 
[\mathcal{F}_{M}(Y)]_{i}=
f(x_{i},{Y}_{i})-M,~ i\in [1;N],~ Y\in \mathbb{R}^{N}.$
Matrix $\mathcal{A}$ is symmetric positive definite (hence, invertible) \cite{usmani1975numerical}; therefore,   
\begin{equation}\label{eq:DiscreteEquationGeneral}
\mathcal{W}=\mathcal{A}^{-1}\bar{\mathcal{B}}+\mathcal{A}^{-1}h^{4}\left(\mathcal{M}+{\mathcal{F}}_{M}(\mathcal{W})\right).
\end{equation}
In other words, $\mathcal{W}$  is a fixed point for the function $\mathcal{R}\colon \mathbb{R}^{N}\rightarrow \mathbb{R}^{N}$, defined as
\begin{equation}\label{eq:R}
\mathcal{R}(\mathcal{V})\defas\mathcal{A}^{-1}\bar{\mathcal{B}}+\mathcal{A}^{-1}h^{4}\left(\mathcal{M}+{\mathcal{F}}_{M}(\mathcal{V})\right),~\mathcal{V}\in \mathbb{R}^{N}.
\end{equation}
By following a  reasoning similar to that used in deducing Theorem \ref{Thm:MainExistenceUniquenessResult}, we obtain:
\begin{theorem}\label{thm:ExistenceUniquenessDiscrete}
    The discrete BVP given by equation \eqref{eq:DiscreteEquationGeneral} has a unique solution, assuming $f$ satisfies \eqref{eq:Assumptions}.
\end{theorem}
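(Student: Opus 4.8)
The plan is to mirror the two-part structure behind Theorem~\ref{Thm:MainExistenceUniquenessResult}: I would establish existence of a fixed point of $\mathcal{R}$ by a finite-dimensional fixed-point theorem (Brouwer, the finite-dimensional avatar of the Schauder argument of Theorem~\ref{thm:ExistenceContinuous}), and establish uniqueness by a discrete energy estimate playing the role of the integration-by-parts computation in Theorem~\ref{Thm:AtMostUniqueness}. The whole argument is driven by two structural facts about $\mathcal{A}$: it is symmetric positive definite (already quoted) and, crucially, \emph{monotone}, i.e. $\mathcal{A}^{-1}\geq 0$ entrywise. The latter is the discrete counterpart of the nonnegativity of the Green's function $G$ in the continuous proof; I would justify it from the structure of $\mathcal{A}$ induced by the two-stage second-order decomposition of the BVP (the interior stencil is the square of the second-difference operator), or cite it from the discretization literature.

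Granting $\mathcal{A}^{-1}\geq 0$, I would set $\bar{\mathcal{W}}\defas\mathcal{A}^{-1}\bar{\mathcal{B}}+h^{4}\mathcal{A}^{-1}\mathcal{M}$, the discrete analog of the polynomial $\bar{w}_{M}$. Because $f_{M}=f-M\leq 0$ gives $\mathcal{F}_{M}(\mathcal{V})\leq 0$ for every $\mathcal{V}$, and $\mathcal{A}^{-1}\geq 0$, we get $\mathcal{R}(\mathcal{V})=\bar{\mathcal{W}}+h^{4}\mathcal{A}^{-1}\mathcal{F}_{M}(\mathcal{V})\leq\bar{\mathcal{W}}$ for all $\mathcal{V}$. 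The monotonicity assumption \eqref{eq:MonotonicityAssumption} makes each $f(x_{i},\cdot)$ nonincreasing, so $\mathcal{F}_{M}$ is antitone; combined with $\mathcal{A}^{-1}\geq 0$ this renders $\mathcal{R}$ antitone, i.e. $\mathcal{V}\geq\mathcal{U}\Rightarrow\mathcal{R}(\mathcal{V})\leq\mathcal{R}(\mathcal{U})$.

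These two properties confine the iteration to a box. I would introduce the order interval $D\defas\{\mathcal{V}\in\mathbb{R}^{N}:\mathcal{R}(\bar{\mathcal{W}})\leq\mathcal{V}\leq\bar{\mathcal{W}}\}$, which (since $\mathcal{R}(\bar{\mathcal{W}})\leq\bar{\mathcal{W}}$ componentwise) is a nonempty closed box, hence compact and convex. For $\mathcal{V}\in D$ the upper bound $\mathcal{R}(\mathcal{V})\leq\bar{\mathcal{W}}$ holds automatically, while $\mathcal{V}\leq\bar{\mathcal{W}}$ with antitonicity gives $\mathcal{R}(\mathcal{V})\geq\mathcal{R}(\bar{\mathcal{W}})$; thus $\mathcal{R}(D)\subseteq D$. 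Since $f$ is continuous, $\mathcal{R}$ is continuous, and Brouwer's fixed-point theorem yields a fixed point in $D$, i.e. a solution of \eqref{eq:DiscreteEquationGeneral}.

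For uniqueness, suppose $\mathcal{W},\mathcal{U}$ both solve \eqref{eq:DiscreteEquationGeneral} and set $\mathcal{E}\defas\mathcal{W}-\mathcal{U}$. Subtracting the two copies of \eqref{eq:DiscreteEquation} cancels $\bar{\mathcal{B}}$ and $\mathcal{M}$ and leaves $\mathcal{A}\mathcal{E}=h^{4}(\mathcal{F}_{M}(\mathcal{W})-\mathcal{F}_{M}(\mathcal{U}))$. Pairing with $\mathcal{E}$, the right-hand side is $h^{4}\sum_{i=1}^{N}([\mathcal{W}]_{i}-[\mathcal{U}]_{i})(f(x_{i},[\mathcal{W}]_{i})-f(x_{i},[\mathcal{U}]_{i}))\leq 0$ by \eqref{eq:MonotonicityAssumption} (each summand is nonpositive), whereas the left-hand side $\mathcal{E}^{\intercal}\mathcal{A}\mathcal{E}\geq 0$ by positive definiteness. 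Hence $\mathcal{E}^{\intercal}\mathcal{A}\mathcal{E}=0$, and positive definiteness forces $\mathcal{E}=0$. Here the positive definiteness of $\mathcal{A}$ discharges exactly the role of the term $\int_{a}^{b}(E'')^{2}$ in Theorem~\ref{Thm:AtMostUniqueness}, so this half is, if anything, cleaner than its continuous counterpart. The main obstacle is the existence half, and within it the only nonroutine ingredient is the entrywise nonnegativity $\mathcal{A}^{-1}\geq 0$; once that is in hand, everything downstream is a direct transcription of the order-interval/Schauder argument into $\mathbb{R}^{N}$.
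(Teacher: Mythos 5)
Your proposal is correct and follows essentially the same route as the paper: your order interval $D$ is exactly the paper's set $\mathcal{D}$, Brouwer's theorem is just the finite-dimensional case of Schauder's theorem (which the paper invokes), and your uniqueness argument is the same monotonicity-plus-positive-definiteness pairing, merely applied to $\mathcal{E}^{\intercal}\mathcal{A}\mathcal{E}$ rather than to $(\mathcal{F}_{M}(\mathcal{V})-\mathcal{F}_{M}(\mathcal{W}))^{\intercal}\mathcal{A}^{-1}(\mathcal{F}_{M}(\mathcal{V})-\mathcal{F}_{M}(\mathcal{W}))$ as the paper does. If anything, you are more careful than the paper: the entrywise nonnegativity $\mathcal{A}^{-1}\geq 0$ that you isolate (provable from $\mathcal{A}=T^{2}$ with $T$ the tridiagonal second-difference M-matrix) is used implicitly but never stated in the paper's one-line claims that $\mathcal{D}$ is nonempty and that $\mathcal{R}\colon\mathcal{D}\rightarrow\mathcal{D}$.
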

\begin{proof}
Let 
$
\mathcal{D}=\{x\in \mathbb{R}^{N},\mathcal{R}(\mathcal{A}^{-1}(\bar{\mathcal{B}}+h^{4}\mathcal{M})) \leq x \leq \mathcal{A}^{-1}(\bar{\mathcal{B}}+h^{4}\mathcal{M})\}.
$
Note that $\mathcal{D}$ nonempty, due to the nonpositivity of $f_{M}$,  convex, and compact. Moreover,  $\mathcal{R}:\mathcal{D}\rightarrow \mathcal{D}$. Therefore, by Schauder's fixed-point theorem, $\mathcal{R}$ has at least one fixed point. Let $\mathcal{V}$ and $\mathcal{W}$ be two fixed points of $\mathcal{R}$. Then, we have 
$
\mathcal{V}-\mathcal{W}=\mathcal{A}^{-1}h^{4}({\mathcal{F}}_{M}(\mathcal{V})-{\mathcal{F}}_{M}(\mathcal{W})).
$
Consequently,
$$
({\mathcal{F}}_{M}(\mathcal{V})-{\mathcal{F}}_{M}(\mathcal{W}))^{\intercal}(\mathcal{V}-\mathcal{W})= ({\mathcal{F}}_{M}(\mathcal{V})-{\mathcal{F}}_{M}(\mathcal{W}))^{\intercal}\mathcal{A}^{-1}h^{4}({\mathcal{F}}_{M}(\mathcal{V})-{\mathcal{F}}_{M}(\mathcal{W})).
$$
We have $({\mathcal{F}}_{M}(\mathcal{V})-{\mathcal{F}}_{M}(\mathcal{W}))^{\intercal}(\mathcal{V}-\mathcal{W})\leq 0$ due to the monotonicity of $f_{M}$. On the other hand, and due to the positive definiteness of $\mathcal{A}^{-1}$, we have  $({\mathcal{F}}_{M}(\mathcal{V})-{\mathcal{F}}_{M}(\mathcal{W}))^{\intercal}\mathcal{A}^{-1}h^{4}({\mathcal{F}}_{M}(\mathcal{V})-{\mathcal{F}}_{M}(\mathcal{W}))\geq 0$, with equality holding only if ${\mathcal{F}}_{M}(\mathcal{V})-{\mathcal{F}}_{M}(\mathcal{W})=0$; hence, ${\mathcal{F}}_{M}(\mathcal{V})={\mathcal{F}}_{M}(\mathcal{W})$.  Consequently, and using equation \eqref{eq:DiscreteEquationGeneral}, we have $\mathcal{V}=\mathcal{W}$.  
\end{proof}
In the next section, we show how the unique solution to \eqref{eq:DiscreteEquationGeneral} for the piecewise linear instance of $f$ given by equation \eqref{eq:PWLCase} can be attained by means of fixed-point iterations.
\section{Numerical solution for the piecewise linear case}
   Recall the discrete equation \eqref{eq:DiscreteEquation}\label{sec:NumericalPWL} and the piecewise linear instance of $f$ given by equation \eqref{eq:PWLCase}.  Let $\mathbf{g}_{i},~i\in [0;N+1]$, denote the   values of $g$ at $x_{i}$ ($\mathbf{g}_{i}= g(x_{i})$), and $\mathcal{G}=[\mathbf{g}_{1},\cdots,\mathbf{g}_{N}]^{\intercal}\in \mathbb{R}^{N}$. For that case, the discrete equation \eqref{eq:DiscreteEquation} can be written as
$
\mathcal{A}\mathcal{W}=\bar{\mathcal{B}}-h^{4}K(\mathcal{W}-\mathcal{G})\mathbf{H}(\mathcal{W}-\mathcal{G}).
$
Using the facts that  $\mathbf{H}(x)=(1+\mathrm{sign}(x))/2$ ($\mathrm{sign}$ is the signum function), and $x\times \mathrm{sign}(x)=|x|$, we  get
\begin{equation}\label{eq:AVEInitial}
(\mathcal{A}+\frac{h^{4}}{2}K\id)\mathcal{W}=\bar{\mathcal{B}}+\frac{h^{4}K}{2}\mathcal{G}-\frac{h^{4}K}{2}|\mathcal{W}-\mathcal{G}|,
\end{equation}
Using the substitution $\mathcal{Z}=\mathcal{W}-\mathcal{G}$, the above equation can be rewritten as
\begin{equation}\label{eq:AVE}
(\mathcal{A}+\frac{h^{4}}{2}K\id)\mathcal{Z}=\bar{\mathcal{B}}-\mathcal{A}\mathcal{G}-\frac{h^{4}K}{2}|\mathcal{Z}|,
\end{equation}
Equation \eqref{eq:AVE} is an instance of the so-called  absolute value equation, which has been investigated extensively in the literature due to its relevance in mathematical programming problems, with several existence and uniqueness results and proposed solution procedures (see, e.g.,  \cite{MangasarianMeyer06,Mezzadri20,RohnHooshyarbakhshFarhadsefat14,WuLi18, WuLi20,RadonsRump22}). Herein, we aim to solve equation \eqref{eq:AVEInitial} by means of fixed-point iterations. 
By utilizing the invertibility of $\mathcal{A}+\frac{h^{4}}{2}K\id$, for all $K\in \mathbb{R}_{+}$,\footnote{The sum of a positive definite  matrix and a semi positive definite matrix is positive definite; hence, invertible.} equation \eqref{eq:AVEInitial}  can equivalently be rewritten as
$  
\mathcal{W}=(\mathcal{A}+\frac{h^{4}}{2}K\id)^{-1}(\bar{\mathcal{B}}+\frac{h^{4}}{2}K\mathcal{G})-\frac{h^{4}K}{2}(\mathcal{A}+\frac{h^{4}}{2}K\id)^{-1}|\mathcal{W}-\mathcal{G}|.
$
In other words, $\mathcal{W}$ is a fixed point for the operator 
\begin{equation}\label{eq:ContractionMapping}
\mathcal{T}(X)\defas (\mathcal{A}+\frac{h^{4}}{2}K\id)^{-1}(\bar{\mathcal{B}}+\frac{h^{4}}{2}K\mathcal{G})-\frac{h^{4}K}{2}(\mathcal{A}+\frac{h^{4}}{2}K\id)^{-1}|X-\mathcal{G}|,~ X\in \mathbb{R}^{N}.
 \end{equation}
Below, we show the contractive property of $\mathcal{T}$.
\begin{lemma}\label{lem:Contractive}
For all $K\in \mathbb{R}_{+}$, the mapping $\mathcal{T}$ in \eqref{eq:ContractionMapping} is contractive. 
\end{lemma}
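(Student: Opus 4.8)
The plan is to show directly that $\mathcal{T}$ is a contraction with respect to the spectral ($2$-) norm, exploiting that $\mathcal{A} + \frac{h^{4}}{2}K\id$ is symmetric positive definite. First I would take two arbitrary points $X, Y \in \mathbb{R}^{N}$ and form the difference $\mathcal{T}(X) - \mathcal{T}(Y)$. The affine term $(\mathcal{A} + \frac{h^{4}}{2}K\id)^{-1}(\bar{\mathcal{B}} + \frac{h^{4}}{2}K\mathcal{G})$ is independent of the argument and cancels, leaving only the nonlinear part:
$$
\mathcal{T}(X) - \mathcal{T}(Y) = -\frac{h^{4}K}{2}\left(\mathcal{A} + \frac{h^{4}}{2}K\id\right)^{-1}\left(\abs{X - \mathcal{G}} - \abs{Y - \mathcal{G}}\right).
$$

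Next I would bound the nonlinear term componentwise. For each index $i$ the reverse triangle inequality gives $\abs{\,\abs{X_i - \mathbf{g}_i} - \abs{Y_i - \mathbf{g}_i}\,} \leq \abs{X_i - Y_i}$, so that $\norm{\abs{X - \mathcal{G}} - \abs{Y - \mathcal{G}}}_2 \leq \norm{X - Y}_2$. Applying the induced $2$-norm together with submultiplicativity then yields
$$
\norm{\mathcal{T}(X) - \mathcal{T}(Y)}_2 \leq \frac{h^{4}K}{2}\norm{\left(\mathcal{A} + \frac{h^{4}}{2}K\id\right)^{-1}}_2 \norm{X - Y}_2.
$$

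The final step is to show the resulting constant is strictly below one. Since $\mathcal{A}$ is symmetric positive definite \cite{usmani1975numerical}, so is $\mathcal{A} + \frac{h^{4}}{2}K\id$, whose eigenvalues are those of $\mathcal{A}$ shifted by $\frac{h^{4}}{2}K$; hence its inverse has spectral norm equal to the reciprocal of its smallest eigenvalue, $\norm{(\mathcal{A} + \frac{h^{4}}{2}K\id)^{-1}}_2 = (\lambda_1(\mathcal{A}) + \frac{h^{4}}{2}K)^{-1}$ with $\lambda_1(\mathcal{A}) > 0$. Substituting gives the Lipschitz constant
$$
\kappa = \frac{h^{4}K/2}{\lambda_1(\mathcal{A}) + h^{4}K/2},
$$
and I would conclude by observing that $0 \leq \kappa < 1$ for every $K \in \mathbb{R}_{+}$, precisely because $\lambda_1(\mathcal{A})$ is strictly positive.

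The only real subtlety is the choice of norm. The componentwise reverse triangle inequality is norm-agnostic, but to obtain a constant that is explicitly bounded below one I rely on the spectral norm, where the positive-definite structure lets me identify $\norm{(\cdot)^{-1}}_2$ exactly with $1/\lambda_1(\cdot)$. The crucial point making the estimate uniform in $K$ is that $\lambda_1(\mathcal{A})$ is bounded away from zero independently of $K$, so larger stiffness only drives $\kappa$ toward $1$ without ever attaining it; I do not expect any genuine obstacle beyond recording these standard facts carefully.
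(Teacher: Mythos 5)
Your proof is correct, and its first two steps (cancellation of the affine part, the componentwise reverse triangle inequality $\abs{\abs{X_i-\mathbf{g}_i}-\abs{Y_i-\mathbf{g}_i}}\leq\abs{X_i-Y_i}$, and passage to the induced $2$-norm of the symmetric matrix) are exactly the paper's. Where you diverge is the final step: you invoke only positive definiteness of $\mathcal{A}$, obtaining the contraction constant $\kappa=\frac{h^{4}K/2}{\lambda_{1}(\mathcal{A})+h^{4}K/2}<1$. This is more elementary and would work for any symmetric positive definite matrix in place of $\mathcal{A}$, but the constant it produces depends on the mesh: $\lambda_{1}(\mathcal{A})=16\sin^{4}\left(\frac{\pi}{2(N+1)}\right)$ shrinks like $h^{4}$ as $N\to\infty$, so your argument alone does not show that $\kappa$ stays bounded away from $1$ uniformly in $N$. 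The paper instead substitutes the explicit eigenvalues of $\mathcal{A}$ and uses the inequality $x\leq\sin(\pi x/2)$ on $\intcc{0,1}$, which makes the $h^{4}$ factors cancel and yields the mesh-independent constant $\mathcal{C}=\frac{(b-a)^{4}K}{(b-a)^{4}K+32}$. That uniformity is immaterial for the lemma as stated (contractivity at a fixed discretization), but it is precisely what is quoted in Theorem \ref{thm:FixedPointIterations} (the rate $\mathcal{C}^{j}$ with $\mathcal{C}$ independent of $N$) and reused in the proof of Theorem \ref{thm:Convergence}, where the bound $\norm{\frac{h^{4}K}{2}(\mathcal{A}+\frac{h^{4}}{2}K\id)^{-1}}_{2}\leq\mathcal{C}<1$ is inverted to get $\norm{(\mathcal{A}+\frac{h^{4}}{2}K\id)^{-1}}_{2}\leq 2(b-a)^{4}/(((b-a)^{4}K+32)h^{4})$ and hence the $O(\sqrt{h})$ convergence rate; so if you intend your lemma to feed those results, you would need to sharpen the last step as the paper does. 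One wording caution: your estimate is not ``uniform in $K$'' (indeed $\kappa\to 1$ as $K\to\infty$, as you note); what is true, and all the lemma requires, is that $\kappa<1$ for each fixed $K\in\mathbb{R}_{+}$.
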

\begin{proof}
 For any $X,Y\in \mathbb{R}^{N}$, and any $p$-norm, $p\in \intcc{1,\infty}$,
$  
\norm{\mathcal{T}(X)-\mathcal{T}(Y)}_{p}\leq 
\norm{\frac{h^{4}K}{2}(\mathcal{A}+\frac{h^{4}}{2}K\id)^{-1}}_{p}\norm{\abs{X-\mathcal{G}}-\abs{Y-\mathcal{G}}}_{p}\leq \norm{\frac{h^{4}K}{2}(\mathcal{A}+\frac{h^{4}}{2}K\id)^{-1}}_{p}\norm{X-Y}_{p}. 
$
Therefore, it is sufficient to show that  $
\norm{\frac{h^{4}K}{2}(\mathcal{A}+\frac{h^{4}}{2}K\id)^{-1}}_{2}<1.
    $ 
    Note that $\frac{h^{4}K}{2}(\mathcal{A}+\frac{h^{4}}{2}K\id)^{-1}$ is symmetric; hence its 2-norm is equivalent to its largest eigenvalue. The eigenvalues of $\mathcal{A}$ are (see \cite{Usmani79,usmani1975numerical})
$
\lambda_{i}(\mathcal{A})=16\sin^{4}\left(\frac{i \pi}{2(N+1)}\right),~i\in [1;N].
$
Hence, using Weyl's inequality \cite{HornJohnson12},
$
\lambda_{i}\left(\mathcal{A}+\frac{h^{4}K}{2}\id\right)=\frac{h^{4}K}{2}+16\sin^{4}\left(\frac{i \pi}{2(N+1)}\right),~i\in \intcc{1;N}.
$
Consequently,
\begin{align*}
 \lambda_{i}(\frac{h^{4}K}{2}(\mathcal{A}+\frac{h^{4}}{2}K\id)^{-1})&=\frac{\frac{h^{4}K}{2}}{\frac{h^{4}K}{2}+16\sin^{4}\left(\frac{(N+1-i) \pi}{2(N+1)}\right)}\leq \frac{(b-a)^4K}{(b-a)^4K+32} <1,~i\in \intcc{1;N},
\end{align*} 
where the  inequality follows from the fact that 
$
x\leq \sin({\pi}x/2),~x\in \intcc{0,1},
$
and the the fact that  $h=(b-a)/(N+1)$.
\end{proof}  
As a consequence of the above lemma, and using the contraction mapping theorem, we have
\begin{theorem}\label{thm:FixedPointIterations}
    For any $K\in\mathbb{R}_{+}$,  the  
fixed-point iterations 
    $$
W^{j}=(\mathcal{A}+\frac{h^{4}}{2}K\id)^{-1}(\bar{\mathcal{B}}+\frac{h^{4}}{2}K\mathcal{G})-\frac{h^{4}K}{2}(\mathcal{A}+\frac{h^{4}}{2}K\id)^{-1}|W^{j-1}-\mathcal{G}|,~j\in \mathbb{N}
,$$
where $W^{0}\in \mathbb{R}^{N}$ is arbitrary, converge to the unique solution of equation \eqref{eq:AVEInitial}, denoted $W^{\ast}$, where
$$
\norm{W^{j}-W^{\ast}}_{2}\leq \mathcal{C}^{j}\frac{1}{1-\mathcal{C}}\norm{W^{1}-W^{0}}_{2},~j\in \mathbb{N},~\mathcal{C}\defas\frac{(b-a)^4K}{(b-a)^4K+32}.
$$

\end{theorem}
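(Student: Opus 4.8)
The plan is to recognize the statement as a direct application of the Banach (contraction mapping) fixed-point theorem on the complete metric space $(\mathbb{R}^{N},\norm{\cdot}_{2})$, with Lemma \ref{lem:Contractive} supplying the only analytical input. Crucially, the proof of Lemma \ref{lem:Contractive} establishes not merely that $\mathcal{T}$ is contractive but the explicit estimate $\norm{\mathcal{T}(X)-\mathcal{T}(Y)}_{2}\leq \mathcal{C}\norm{X-Y}_{2}$ for all $X,Y\in \mathbb{R}^{N}$, with $\mathcal{C}\defas (b-a)^{4}K/((b-a)^{4}K+32)<1$, since the $2$-norm of the symmetric matrix coefficient was bounded by $\mathcal{C}$ through its eigenvalues. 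Because $\mathbb{R}^{N}$ under $\norm{\cdot}_{2}$ is complete, Banach's theorem immediately yields a unique fixed point $W^{\ast}$ of $\mathcal{T}$ together with convergence $W^{j}\to W^{\ast}$ from any initial guess $W^{0}$. As fixed points of $\mathcal{T}$ coincide with solutions of \eqref{eq:AVEInitial} by the construction of $\mathcal{T}$ in \eqref{eq:ContractionMapping}, this $W^{\ast}$ is exactly the unique solution of the absolute value equation, in agreement with Theorem \ref{thm:ExistenceUniquenessDiscrete}.

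For the quantitative estimate, the key step is a telescoping argument with geometric decay. First I would show by induction on $j$, applying the contraction bound repeatedly, that consecutive iterates obey $\norm{W^{j+1}-W^{j}}_{2}\leq \mathcal{C}^{j}\norm{W^{1}-W^{0}}_{2}$. Then, for any $m>j$, the triangle inequality combined with this decay gives
\begin{equation*}
\norm{W^{m}-W^{j}}_{2}\leq \sum_{k=j}^{m-1}\norm{W^{k+1}-W^{k}}_{2}\leq \mathcal{C}^{j}\,\frac{1-\mathcal{C}^{m-j}}{1-\mathcal{C}}\,\norm{W^{1}-W^{0}}_{2}.
\end{equation*}
This display re-confirms that $(W^{j})$ is Cauchy (an independent route to convergence) and, upon letting $m\to\infty$ and using $\mathcal{C}<1$, delivers the claimed a priori bound $\norm{W^{j}-W^{\ast}}_{2}\leq \mathcal{C}^{j}\,(1-\mathcal{C})^{-1}\norm{W^{1}-W^{0}}_{2}$.

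I do not anticipate a genuine obstacle, as Lemma \ref{lem:Contractive} already carries out the substantive work of bounding the contraction constant. The only point requiring mild care is bookkeeping: the theorem asserts the \emph{specific} factor $\mathcal{C}$ rather than an unspecified contraction constant, so I must ensure the explicit ratio involving $(b-a)^{4}K$ extracted in the proof of Lemma \ref{lem:Contractive} is propagated unchanged through the geometric-series estimate, and that convergence and the error bound are both stated in the $2$-norm in which that constant was derived. Beyond this, the result is a textbook instance of Banach's theorem.
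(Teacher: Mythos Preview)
Your proposal is correct and matches the paper's approach exactly: the paper presents Theorem~\ref{thm:FixedPointIterations} as an immediate consequence of Lemma~\ref{lem:Contractive} together with the contraction mapping theorem, without a separate proof. If anything, your write-up is more detailed than the paper's, which simply invokes the Banach theorem and the explicit contraction constant $\mathcal{C}$ established in the lemma.
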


\subsection{Convergence analysis}
\label{sec:Convergence}
In this section, we show the convergence of the  solution of equation \eqref{eq:AVEInitial} to the unique solution of  the BVP \eqref{eq:GeneralBVP}, with $f$ as in \eqref{eq:PWLCase}. 
\begin{theorem}\label{thm:Convergence}
Let $\mathcal{W}^{\mathrm{true}}$ be the vector of exact values of the unique solution $w$ to the BVP \eqref{eq:GeneralBVP} with $f$ as in \eqref{eq:PWLCase} at the nodal points ${x_{1}},\ldots,x_{N}$, and let $\mathcal{W}$ be the unique solution to the discrete equation \eqref{eq:AVEInitial}. If $K\in \mathbb{R}_{+}$, then there exists a positive constant $\tilde{C}$ (independent of N), such that 
$
\norm{\mathcal{W}^{\mathrm{true}}-\mathcal{W}}_{2} \leq \tilde{C} \sqrt{h}.
$
\end{theorem}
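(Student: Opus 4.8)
The plan is to combine a consistency (truncation) estimate with the contractive/stability structure already established for the discrete operator. First I would pin down the regularity of the exact solution $w$. Since $f$ is as in \eqref{eq:PWLCase}, we have $w^{(4)}(x)=f(x,w(x))=-K\,(w(x)-g(x))\mathbf{H}(w(x)-g(x))$, which is the composition of the $1$-Lipschitz positive-part map $r\mapsto r\mathbf{H}(r)$ with the $C^{1}$ function $x\mapsto w(x)-g(x)$ (recall $w\in C^{4}\intcc{a,b}$ by Theorem \ref{Thm:MainExistenceUniquenessResult} and $g\in C^{1}\intcc{a,b}$). Hence $w^{(4)}$ is Lipschitz on $\intcc{a,b}$ with some constant $L$; that is, $w$ has a Lipschitz fourth derivative. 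This limited regularity --- we do \emph{not} have $w\in C^{6}$ --- is exactly what forces the reduced $\sqrt{h}$ rate, so it must be isolated up front.

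Next I would form the truncation error. Substituting the exact nodal values $\mathcal{W}^{\mathrm{true}}$ into \eqref{eq:AVEInitial} produces a residual $\tau\in\mathbb{R}^{N}$, so that $(\mathcal{A}+\frac{h^{4}}{2}K\id)\mathcal{W}^{\mathrm{true}}=\bar{\mathcal{B}}+\frac{h^{4}K}{2}\mathcal{G}-\frac{h^{4}K}{2}\abs{\mathcal{W}^{\mathrm{true}}-\mathcal{G}}+\tau$. For an interior index the relevant quantity is $w(x_{i-2})-4w(x_{i-1})+6w(x_{i})-4w(x_{i+1})+w(x_{i+2})-h^{4}w^{(4)}(x_{i})$; representing it through its Peano kernel (a nonnegative bump of total mass $h^{4}$ supported on $\intcc{x_{i-2},x_{i+2}}$) replaces $w^{(4)}(s)$ by $w^{(4)}(s)-w^{(4)}(x_{i})$ inside the integral, which the Lipschitz bound controls by $2Lh$, so the component is $O(h^{5})$. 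For the four boundary-adjacent indices an analogous expansion --- using that the boundary stencil is exact on cubics and that the $w^{(4)}$ weights match those of $-h^{4}f(x_{1},w_{1})+\frac{h^{4}}{12}f(x_{0},w_{0})$ --- leaves a leading residual equal to $h^{4}\bigl(w^{(4)}(x_{1})-w^{(4)}(x_{0})\bigr)$ (and its mirror at the right end), again $O(h^{5})$ by the Lipschitz property. Thus $\norm{\tau}_{\infty}\leq C_{\tau}h^{5}$ with $C_{\tau}$ depending only on $L$ and $b-a$, and therefore $\norm{\tau}_{2}\leq\sqrt{N}\,C_{\tau}h^{5}$.

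I would then exploit stability. Subtracting \eqref{eq:AVEInitial} (satisfied by $\mathcal{W}$) from the perturbed identity above and setting $\mathcal{E}=\mathcal{W}^{\mathrm{true}}-\mathcal{W}$ yields
\[
\mathcal{E}=-\tfrac{h^{4}K}{2}\bigl(\mathcal{A}+\tfrac{h^{4}}{2}K\id\bigr)^{-1}\bigl(\abs{\mathcal{W}^{\mathrm{true}}-\mathcal{G}}-\abs{\mathcal{W}-\mathcal{G}}\bigr)+\bigl(\mathcal{A}+\tfrac{h^{4}}{2}K\id\bigr)^{-1}\tau .
\]
Taking $\norm{\cdot}_{2}$ and using the componentwise $1$-Lipschitzness of the absolute value together with the contraction estimate from the proof of Lemma \ref{lem:Contractive} (namely $\norm{\frac{h^{4}K}{2}(\mathcal{A}+\frac{h^{4}}{2}K\id)^{-1}}_{2}\leq\mathcal{C}<1$) lets me absorb $\mathcal{C}\norm{\mathcal{E}}_{2}$ to the left, giving $\norm{\mathcal{E}}_{2}\leq\frac{1}{1-\mathcal{C}}\norm{(\mathcal{A}+\frac{h^{4}}{2}K\id)^{-1}}_{2}\norm{\tau}_{2}$. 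The operator norm is the reciprocal of the smallest eigenvalue, and from the eigenvalues $16\sin^{4}(\frac{i\pi}{2(N+1)})$ of $\mathcal{A}$ together with $x\leq\sin(\pi x/2)$ on $\intcc{0,1}$ I obtain $\norm{(\mathcal{A}+\frac{h^{4}}{2}K\id)^{-1}}_{2}\leq\frac{(b-a)^{4}}{16\,h^{4}}$.

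Finally I would combine the three estimates: $\norm{\mathcal{E}}_{2}\leq\frac{1}{1-\mathcal{C}}\cdot\frac{(b-a)^{4}}{16h^{4}}\cdot\sqrt{N}\,C_{\tau}h^{5}=\mathrm{const}\cdot\sqrt{N}\,h$, and since $\sqrt{N}\,h\leq\sqrt{b-a}\,\sqrt{h}$ this is $\tilde{C}\sqrt{h}$ with $\tilde{C}$ depending only on $K$, $L$, and $b-a$ --- in particular independent of $N$. The main obstacle is the truncation step: proving the $O(h^{5})$ residual bound under the mere Lipschitz (not $C^{6}$) regularity of $w^{(4)}$, since the classical symmetric-stencil cancellation that would give $O(h^{6})$ is unavailable, and the Peano-kernel representation is precisely what salvages one extra power of $h$ over a crude modulus-of-continuity argument. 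It is worth flagging where the $\sqrt{h}$ (rather than $h^{2}$) originates: the $h^{-4}$ ill-conditioning of $\mathcal{A}$, compounded by the $\sqrt{N}=O(h^{-1/2})$ loss in passing from the uniform to the Euclidean norm of $\tau$, set against the $h^{5}$ consistency.
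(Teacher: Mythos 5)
Your proposal is correct and follows essentially the same route as the paper: write the perturbed equation satisfied by $\mathcal{W}^{\mathrm{true}}$ with a truncation residual, bound that residual by $O(h^{5})$ in the uniform norm (hence $O(h^{4.5})$ in the Euclidean norm) using the limited regularity of $w^{(4)}$, absorb the absolute-value difference via the contraction constant $\mathcal{C}<1$ from Lemma \ref{lem:Contractive}, and pay the $O(h^{-4})$ inverse-norm factor to arrive at $O(\sqrt{h})$. The only cosmetic difference is that you phrase the regularity as Lipschitz continuity of $w^{(4)}$ and run a Peano-kernel argument, whereas the paper equivalently treats $w^{(4)}$ as absolutely continuous with bounded a.e.\ derivative $w^{(5)}$ and writes the remainder $\mathcal{E}_{N}$ as explicit integrals against $w^{(5)}$.
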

\begin{proof}
$\mathcal{W}^{\mathrm{true}}$ satisfies
\begin{equation}\label{eq:AVETrue}
(\mathcal{A}+\frac{h^{4}}{2}K\id)\mathcal{W}^{\mathrm{true}}=\bar{\mathcal{B}}+\frac{h^{4}K}{2}\mathcal{G}-\frac{h^{4}K}{2}|\mathcal{W}^{\mathrm{true}}-\mathcal{G}|+\mathcal{E}_{N},
\end{equation}
where $\mathcal{E}_{N}\in \mathbb{R}^{N}$ consists of the remainder terms resulting from the finite difference approximations. In particular, $\mathcal{E}_{N}$ is given explicitly as
$$
[\mathcal{E}_{N}]_{i}=\begin{cases}
\left(-h^{4}\int_{a}^{a+h}+5\int_{a}^{a+h}\frac{(a+h-s)^{4}}{4!}-4\int_{a}^{a+2h}\frac{(a+2h-s)^{4}}{4!}
+\int_{a}^{a+3h}\frac{(a+3h-s)^{4}}{4!}\right)w^{(5)}(s)\mathrm{d}s,~i=1,\\
\left(-\int^{x_{i}}_{x_{i}-2h}\frac{(x_{i}-2h-s)^{4}}{4!}+4\int^{x_{i}}_{x_{i}-h}\frac{(x_{i}-h-s)^{4}}{4!}-4\int_{x_{i}}^{x_{i}+h}\frac{(x_{i}+h-s)^{4}}{4!}+\int_{x_{i}}^{x_{i}+2h}\frac{(x_{i}+2h-s)^{4}}{4!}\right)w^{(5)}(s)\mathrm{d}s,~i\in \intcc{2;N-1},
\\
\left(h^{4}\int^{b}_{b-h}-5\int^{b}_{b-h}\frac{(b-h-s)^{4}}{4!}+4\int^{b}_{b-2h}\frac{(b-2h-s)^{4}}{4!}
-\int^{b}_{b-3h}\frac{(b-3h-s)^{4}}{4!}\right)w^{(5)}(s)\mathrm{d}s,~i=N.
\end{cases}
$$
Note that $w^{(4)}$ is absolutely continuous and equal to $-K(w-g)\mathbf{H}(w-g)$ with an integrable derivative  $w^{(5)}$  given by 
$w^{(5)}=-K (w'-g')(1+\mathrm{sign}(w-g)).$
Combining equations \eqref{eq:AVEInitial} and \eqref{eq:AVETrue} yields 
$$
(\mathcal{A}+\frac{h^{4}}{2}K\id)\left(\mathcal{W}^{\mathrm{true}}-\mathcal{W}\right)=-\frac{h^{4}K}{2}\left(\abs{\mathcal{W}^{\mathrm{true}}-\mathcal{G}}-\abs{\mathcal{W}-\mathcal{G}}\right)
+\mathcal{E}_{N}
$$
or
$$
\mathcal{W}^{\mathrm{true}}-\mathcal{W}=-\frac{h^{4}K}{2}(\mathcal{A}+\frac{h^{4}}{2}K\id)^{-1}\left(\abs{\mathcal{W}^{\mathrm{true}}-\mathcal{G}}-\abs{\mathcal{W}-\mathcal{G}}\right)
+(\mathcal{A}+\frac{h^{4}}{2}K\id)^{-1}\mathcal{E}_{N}.
$$
Consequently, 
$
\norm{\mathcal{W}^{\mathrm{true}}-\mathcal{W}}_{2} \leq  \norm{\frac{h^{4}K}{2}(\mathcal{A}+\frac{h^{4}}{2}K\id)^{-1}}_{2}\norm{\mathcal{W}^{\mathrm{true}}-\mathcal{W}  }_{2}+\norm{(\mathcal{A}+\frac{h^{4}}{2}K\id)^{-1}}_{2} \norm{\mathcal{E}_{N}}_{2}.
$
Using the definitions of $\mathcal{E}_{N}$ and $w^{(5)}$ above, it can be shown with straightforward calculations  that 
$
\norm{ \mathcal{E}_{N}}_{\infty} \leq C h^{5}
$
for some positive constant $C$ independent of $N$. Therefore, using the fact that $
\|x\|_{2}\leq \sqrt{n}\|x\|_{\infty}~\forall x\in \mathbb{R}^{n},
    $
$
\norm{ \mathcal{E}_{N}}_{2}\leq \bar{C}h^{4.5
}
$
for some positive constant $\bar{C}$ independent of $N$.
 Recall from the proof of Lemma \ref{lem:Contractive} that $
\norm{\frac{h^{4}K}{2}(\mathcal{A}+\frac{h^{4}}{2}K\id)^{-1}}_{2} \leq {(b-a)^4K}/((b-a)^4K+32)<1,
 $
which implies that 
$
\norm{(\mathcal{A}+\frac{h^{4}}{2}K\id)^{-1}}_{2}\leq {2(b-a)^4}/(((b-a)^4K+32)h^{4}).
$
Therefore, 
$$ 
\norm{\mathcal{W}^{\mathrm{true}}-\mathcal{W}}_{2} \leq\frac{(b-a)^4K}{(b-a)^4K+32}\norm{\mathcal{W}^{\mathrm{true}}-\mathcal{W}  }_{2}+\bar{C}\frac{2(b-a)^4}{((b-a)^4K+32)h^{4}}h^{4.5}
$$
implying $\|\mathcal{W}^{\mathrm{true}}-\mathcal{W}  \|_{2} \leq \tilde{C}h^{0.5}$ for some positive constant  
$\tilde{C}$ independent of $N$.
\end{proof}
\subsection{Illustrative example}
\label{sec:Example}
In this section, we illustrate the approximate solutions, obtained through fixed-point computations. Consider the BVP \eqref{eq:GeneralBVP} over the interval $\intcc{a,b}=\intcc{0,1}$, where $f$ is as in \eqref{eq:PWLCase}, $K=10^4$, $g(x)=x/2$, $w(0)=w(1)=0$ and $w''(0)=w''(1)=-20$. As discussed in the Introduction, such a BVP corresponds to the static configuration of VFs with incorporated contact forces.
Herein, we discretize the problem as in Section \ref{sec:NumericalApproach}, with different values of the discretization parameter $N$, and adopt the fixed-point iterations in Theorem \ref{thm:FixedPointIterations}, with different number of iterations (and zero initial guess). 
\begin{figure}[htpb]
    \centering
\includegraphics[width=0.45\linewidth]{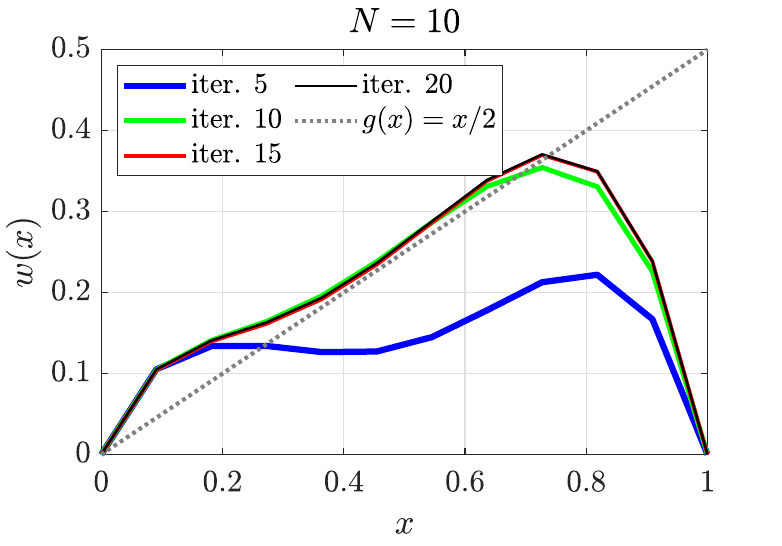}\includegraphics[width=0.45\linewidth]{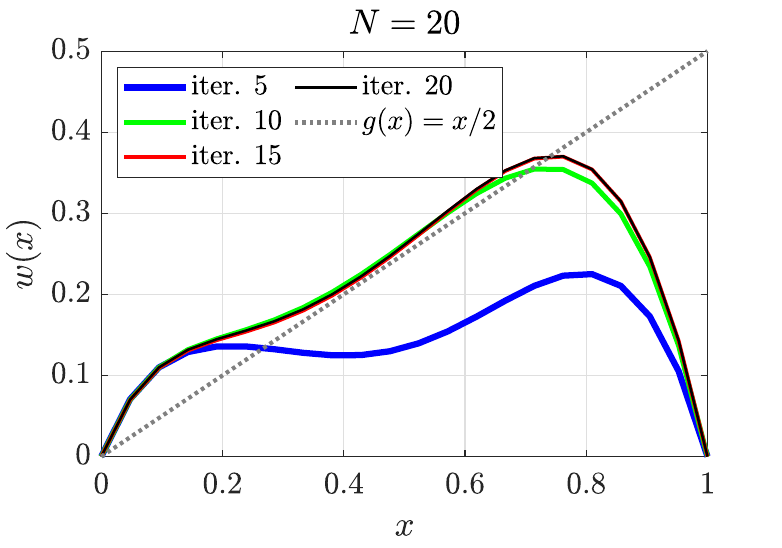}
\includegraphics[width=0.45\linewidth]{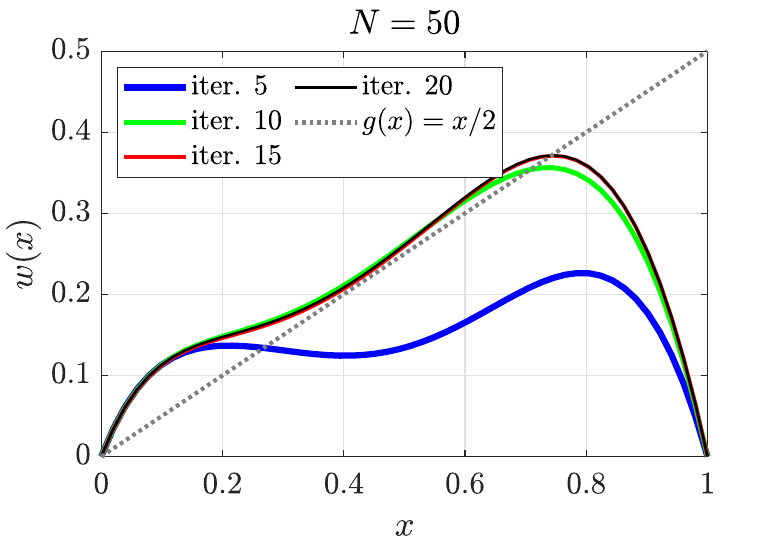}\includegraphics[width=0.45\linewidth]{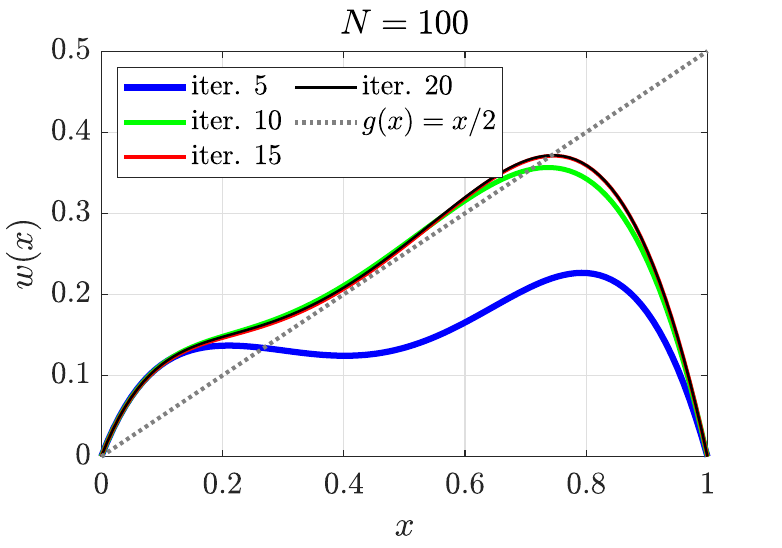}
    \caption{Numerical solutions to the BVP in Section \ref{sec:Example} with different values of the discretization parameter $N$ and different iterations of the fixed-point computations from Theorem \ref{thm:FixedPointIterations}. }
    \label{fig:Simulations}
\end{figure}
Figure \ref{fig:Simulations} shows that the fixed-point computations (almost) converge within the first 20 iterations for all the considered values of $N$. In addition, the figure shows that for the 20\textsuperscript{th} iteration  and different values of $N$, the numerical solutions almost coincide, especially for $N=50$  and $N=100$, which further supports the convergence result in Section \ref{sec:Convergence}.

\section{Conclusion}
\label{sec:Conclusion}
In this paper, we studied a fourth-order boundary value problem with a bounded above and monotonically decreasing right-hand side, showing solution existence and uniqueness for both the original problem and its discrete counterpart obtained through finite difference. We then focused on a particular piecewise linear instance of the right-hand side, showing that the solution to the discrete equation can be obtained through fixed-point iterations and that the solution to the discrete equation converges to the solution of the continuous problem. We illustrated the effectiveness of the fixed-point iterations through a numerical example. 

Typical voice production incorporates dynamic VF vibrations and time-varying contact forces (microswitches and beams on elastic foundations also often incorporate dynamic vibrations with contact forces). It would be of interest to  extend the techniques covered herein to explore dynamic Euler-Bernoulli beam models with contract forces. Moreover, extensions of the analysis presented herein can be considered in future works to cover more sophisticated beam (e.g., Timoshenko model \cite{Timoshenko21}) and plate models. 

 \bibliographystyle{ieeetr}
%\bibliography{References,References2,References3}

\begin{thebibliography}{10}

\bibitem{StoryTitze95}
B.~H. Story and I.~R. Titze, ``Voice simulation with a body-cover model of the
  vocal folds,'' {\em The Journal of the Acoustical Society of America},
  vol.~97, no.~2, pp.~1249--1260, 1995.

\bibitem{SerrySteppPeterson21}
M.~A. Serry, C.~E. Stepp, and S.~D. Peterson, ``Physics of phonation offset:
  Towards understanding relative fundamental frequency observations,'' {\em The
  Journal of the Acoustical Society of America}, vol.~149, no.~5,
  pp.~3654--3664, 2021.

\bibitem{GalindoPetersonErathCastroHillmanZanartu17}
G.~E. Galindo, S.~D. Peterson, B.~D. Erath, C.~Castro, R.~E. Hillman, and
  M.~Za{\~n}artu, ``Modeling the pathophysiology of phonotraumatic vocal
  hyperfunction with a triangular glottal model of the vocal folds,'' {\em
  Journal of Speech, Language, and Hearing Research}, vol.~60, no.~9,
  pp.~2452--2471, 2017.

\bibitem{ZanartuGalindoErathPetersonWodickaHillman14}
M.~Za{\~n}artu, G.~E. Galindo, B.~D. Erath, S.~D. Peterson, G.~R. Wodicka, and
  R.~E. Hillman, ``Modeling the effects of a posterior glottal opening on vocal
  fold dynamics with implications for vocal hyperfunction,'' {\em The Journal
  of the Acoustical Society of America}, vol.~136, no.~6, pp.~3262--3271, 2014.

\bibitem{DejonckereKob09}
P.~H. Dejonckere and M.~Kob, ``Pathogenesis of vocal fold nodules: new insights
  from a modelling approach,'' {\em Folia Phoniatrica et Logopaedica}, vol.~61,
  no.~3, pp.~171--179, 2009.

\bibitem{SerryAlzamendiZanartuPeterson23b}
M.~Serry, G.~Alzamendi, M.~Zanartu, and S.~Peterson, ``An euler-bernoulli-type
  beam model of the vocal folds for describing curved and incomplete glottal
  closure patterns,'' {\em arXiv preprint}, 2023.

\bibitem{JensenHuangChowKurabayashi05}
B.~D. Jensen, K.~Huang, L.~L.-W. Chow, and K.~Kurabayashi, ``Adhesion effects
  on contact opening dynamics in micromachined switches,'' {\em Journal of
  Applied Physics}, vol.~97, no.~10, p.~103535, 2005.

\bibitem{McCarthyAdamsMcGruerPotter02}
B.~McCarthy, G.~G. Adams, N.~E. McGruer, and D.~Potter, ``A dynamic model,
  including contact bounce, of an electrostatically actuated microswitch,''
  {\em Journal of microelectromechanical systems}, vol.~11, no.~3,
  pp.~276--283, 2002.

\bibitem{HetenyiHetbenyi46}
M.~Het{\'e}nyi and M.~I. Hetbenyi, {\em Beams on elastic foundation: theory
  with applications in the fields of civil and mechanical engineering},
  vol.~16.
\newblock University of Michigan press Ann Arbor, MI, 1946.

\bibitem{Barber11}
J.~Barber, {\em Beams on Elastic Foundations}, pp.~353--384.
\newblock Dordrecht: Springer Netherlands, 2011.

\bibitem{VallabhanDas91}
C.~G. Vallabhan and Y.~Das, ``A refined model for beams on elastic
  foundations,'' {\em International Journal of Solids and Structures}, vol.~27,
  no.~5, pp.~629--637, 1991.

\bibitem{Jones97}
G.~Jones, {\em Analysis of beams on elastic foundations: using finite
  difference theory}.
\newblock Thomas Telford, 1997.

\bibitem{Usmani79}
R.~A. Usmani and M.~J. Marsden, ``Convergence of a numerical procedure for the
  solution of a fourth order boundary value problem,'' {\em Proceedings of the
  Indian Academy of Sciences-Section A. Part 3, Mathematical Sciences},
  vol.~88, no.~1, pp.~21--30, 1979.

\bibitem{Yang88}
Y.~S. Yang, ``Fourth-order two-point boundary value problems,'' {\em
  Proceedings of the American Mathematical Society}, vol.~104, no.~1,
  pp.~175--180, 1988.

\bibitem{Aftabizadeh86}
A.~Aftabizadeh, ``Existence and uniqueness theorems for fourth-order boundary
  value problems,'' {\em Journal of Mathematical Analysis and Applications},
  vol.~116, no.~2, pp.~415--426, 1986.

\bibitem{LiYang10}
Y.~Li and H.~Yang, ``An existence and uniqueness result for a bending beam
  equation without growth restriction,'' in {\em Abstract and Applied
  Analysis}, vol.~2010, Hindawi, 2010.

\bibitem{LiGao19}
Y.~Li and Y.~Gao, ``Existence and uniqueness results for the bending elastic
  beam equations,'' {\em Applied Mathematics Letters}, vol.~95, pp.~72--77,
  2019.

\bibitem{LiLiang13}
Y.~Li and Q.~Liang, ``Existence results for a fully fourth-order boundary value
  problem,'' {\em Journal of Function Spaces and Applications}, vol.~2013,
  2013.

\bibitem{QuangAQuy18}
D.~Quang~A and N.~T.~K. Quy, ``New fixed point approach for a fully nonlinear
  fourth order boundary value problem,'' {\em Boletim da Sociedade Paranaense
  de Matematica}, vol.~36, no.~4, pp.~209--223, 2018.

\bibitem{Agarwal89}
R.~P. Agarwal, ``On fourth order boundary value problems arising in beam
  analysis,'' {\em Differential and Integral Equations}, vol.~2, no.~1,
  pp.~91--110, 1989.

\bibitem{Bai00}
Z.~Bai, ``The method of lower and upper solutions for a bending of an elastic
  beam equation,'' {\em Journal of Mathematical Analysis and Applications},
  vol.~248, no.~1, pp.~195--202, 2000.

\bibitem{HanLi07}
G.~Han and F.~Li, ``Multiple solutions of some fourth-order boundary value
  problems,'' {\em Nonlinear Analysis: Theory, Methods \& Applications},
  vol.~66, no.~11, pp.~2591--2603, 2007.

\bibitem{FengJiGe09}
H.~Feng, D.~Ji, and W.~Ge, ``Existence and uniqueness of solutions for a
  fourth-order boundary value problem,'' {\em Nonlinear Analysis: Theory,
  Methods \& Applications}, vol.~70, no.~10, pp.~3561--3566, 2009.

\bibitem{LiChen19}
Y.~Li and X.~Chen, ``Solvability for fully cantilever beam equations with
  superlinear nonlinearities,'' {\em Boundary Value Problems}, vol.~2019,
  no.~1, pp.~1--9, 2019.

\bibitem{QuangAHuong18}
D.~Quang~A and N.~T. Huong, ``Existence results and numerical method for a
  fourth order nonlinear problem,'' {\em International Journal of Applied and
  Computational Mathematics}, vol.~4, no.~6, p.~148, 2018.

\bibitem{QuangAQuy17}
D.~Quang~A and T.~K. Quy, ``Existence results and iterative method for solving
  the cantilever beam equation with fully nonlinear term,'' {\em Nonlinear
  Analysis: Real World Applications}, vol.~36, pp.~56--68, 2017.

\bibitem{CaballeroHarjaniSadarangani11}
J.~Caballero, J.~Harjani, and K.~Sadarangani, ``Uniqueness of positive
  solutions for a class of fourth-order boundary value problems,'' in {\em
  Abstract and Applied Analysis}, vol.~2011, Hindawi, 2011.

\bibitem{RuyunJihuiShengmao97}
M.~Ruyun, Z.~Jihui, and F.~Shengmao, ``The method of lower and upper solutions
  for fourth-order two-point boundary value problems,'' {\em Journal of
  Mathematical Analysis and Applications}, vol.~215, no.~2, pp.~415--422, 1997.

\bibitem{RoydenFitzpatrick88}
H.~L. Royden and P.~Fitzpatrick, {\em Real analysis}, vol.~32.
\newblock Macmillan New York, 1988.

\bibitem{Tao11}
T.~Tao, {\em An introduction to measure theory}, vol.~126.
\newblock American Mathematical Soc., 2011.

\bibitem{Zeidler12}
E.~Zeidler, {\em Applied functional analysis: applications to mathematical
  physics}, vol.~108.
\newblock Springer Science \& Business Media, 2012.

\bibitem{usmani1975numerical}
R.~A. Usmani and M.~J. Marsden, ``Numerical solution of some ordinary
  differential equations occurring in plate deflection theory,'' {\em Journal
  of Engineering Mathematics}, vol.~9, no.~1, pp.~1--10, 1975.

\bibitem{wazwaz2002numerical}
A.-M. Wazwaz, ``The numerical solution of special fourth-order boundary value
  problems by the modified decomposition method,'' {\em International journal
  of computer mathematics}, vol.~79, no.~3, pp.~345--356, 2002.

\bibitem{ali2011numerical}
A.~Ali, ``Numerical solution of fourth order boundary-value problems using haar
  wavelets,'' {\em Appl. Math. Sci.}, vol.~5, no.~63, pp.~3131--3146, 2011.

\bibitem{zahra2011finite}
W.~K. Zahra, ``Finite-difference technique based on exponential splines for the
  solution of obstacle problems,'' {\em International Journal of Computer
  Mathematics}, vol.~88, no.~14, pp.~3046--3060, 2011.

\bibitem{erturk2007comparing}
V.~S. Ert{\"u}rk and S.~Momani, ``Comparing numerical methods for solving
  fourth-order boundary value problems,'' {\em Applied Mathematics and
  Computation}, vol.~188, no.~2, pp.~1963--1968, 2007.

\bibitem{hajji2008numerical}
M.~A. Hajji and K.~Al-Khaled, ``Numerical methods for nonlinear fourth-order
  boundary value problems with applications,'' {\em International Journal of
  Computer Mathematics}, vol.~85, no.~1, pp.~83--104, 2008.

\bibitem{MangasarianMeyer06}
O.~Mangasarian and R.~Meyer, ``Absolute value equations,'' {\em Linear Algebra
  and Its Applications}, vol.~419, no.~2-3, pp.~359--367, 2006.

\bibitem{Mezzadri20}
F.~Mezzadri, ``On the solution of general absolute value equations,'' {\em
  Applied mathematics letters}, vol.~107, p.~106462, 2020.

\bibitem{RohnHooshyarbakhshFarhadsefat14}
J.~Rohn, V.~Hooshyarbakhsh, and R.~Farhadsefat, ``An iterative method for
  solving absolute value equations and sufficient conditions for unique
  solvability,'' {\em Optimization Letters}, vol.~8, pp.~35--44, 2014.

\bibitem{WuLi18}
S.-L. Wu and C.-X. Li, ``The unique solution of the absolute value equations,''
  {\em Applied Mathematics Letters}, vol.~76, pp.~195--200, 2018.

\bibitem{WuLi20}
S.-L. Wu and C.-X. Li, ``A note on unique solvability of the absolute value
  equation,'' {\em Optimization Letters}, vol.~14, no.~7, pp.~1957--1960, 2020.

\bibitem{RadonsRump22}
M.~Radons and S.~M. Rump, ``Convergence results for some piecewise linear
  solvers,'' {\em Optimization Letters}, vol.~16, no.~6, pp.~1663--1673, 2022.

\bibitem{HornJohnson12}
R.~A. Horn and C.~R. Johnson, {\em Matrix analysis}.
\newblock Cambridge university press, 2~ed., 2013.

\bibitem{Timoshenko21}
P.~S. Timoshenko, ``Lxvi. on the correction for shear of the differential
  equation for transverse vibrations of prismatic bars,'' {\em The London,
  Edinburgh, and Dublin Philosophical Magazine and Journal of Science},
  vol.~41, no.~245, pp.~744--746, 1921.

\end{thebibliography}

\end{document}